\setlist{  
listparindent=\parindent,
parsep=0pt,
}
\newtheorem{theorem}{Theorem}[section]
\newtheorem{lemma}[theorem]{Lemma}
\theoremstyle{definition}
\theoremstyle{remark}
\newtheorem{remark}[theorem]{Remark}
\numberwithin{equation}{section}
\newcommand{\eps}{\ensuremath{\varepsilon}}
\newcommand{\emptyparam}{\ensuremath{\,\cdot\,}}
\newcommand{\norm}[1]{\left\lVert#1\right\rVert}
\renewcommand{\epsilon}{\varepsilon}
\newcommand{\xl}{{x, \lambda}}
\newcommand{\pxi}{\partial_{x_i}}
\newcommand{\pl}{\partial_{\lambda}}
\newcommand{\Ds}{{(-\Delta)^{s}}}
\newcommand{\ds}{{(-\Delta)^{s/2}}}
\newcommand{\dd}{\, \mathrm{d}}
\newcommand{\N}{\ensuremath{\mathbb N}} 
\newcommand{\R}{\ensuremath{\mathbb R}} 
\renewcommand{\S}{\ensuremath{\mathbb S}} 
\newcommand{\scalprod}[1]{\ensuremath{\langle #1\rangle}} 
\DeclareMathOperator{\dist}{dist}
\begin{document}

\title[Stability of critical points of the fractional Sobolev inequality]{Stability with explicit constants of the critical points of the fractional Sobolev inequality and applications to fast diffusion}

\subjclass[2020]{35R11, 35A01,  35A15, 35S15, 47G20.}
\keywords{Fractional Laplacian; Sobolev inequality; stability of critical points; fractional fast diffusion equation; finite time extinction; extinction profiles.}

\author[N. De Nitti]{Nicola De Nitti}
\address[N. De Nitti]{Friedrich-Alexander-Universität Erlangen-Nürnberg, Department of Mathematics, Chair for Dynamics, Control, Machine Learning and Numerics (Alexander von Humboldt Professorship), Cauerstr. 11, 91058 Erlangen, Germany.}
\email{nicola.de.nitti@fau.de}

\author[T. König]{Tobias König}
\address[T. König]{Goethe-Universität Frankfurt, Institut für Mathematik, 
Robert-Mayer-Str. 10, 60325 Frankfurt am Main, Germany.}
\email{koenig@mathematik.uni-frankfurt.de}

\thanks{}

\begin{abstract}
We study the quantitative stability of critical points of the fractional Sobolev inequality. We show that, for a non-negative function $u \in \dot H^s(\mathbb R^N)$ whose energy satisfies $$\tfrac{1}{2} S^\frac{N}{2s}_{N,s} \le \|u\|_{\dot H^s(\mathbb R^N)} \le \tfrac{3}{2}S_{N,s}^\frac{N}{2s},$$
where $S_{N,s}$ is the optimal Sobolev constant, the bound
$$    \|u -U[z,\lambda]\|_{\dot{H}^s(\mathbb R^N)} \lesssim \|(-\Delta)^s u - u^{2^*_s-1}\|_{\dot{H}^{-s}(\mathbb R^N)},
$$
holds for a suitable fractional Talenti bubble $U[z,\lambda]$. {For functions $u$ which are close to Talenti bubbles, we give the sharp asymptotic value of the implied constant in this inequality.} As an application {of this}, we derive an explicit polynomial extinction rate for positive solutions to a fractional fast diffusion equation.
\end{abstract}

\maketitle


\section{Introduction}
\label{sec:intro}

Let $N \in \N$ and $0 < s < N/2$. We consider {non-negative solutions to} the fractional critical elliptic problem 
\begin{align}\label{eq:elliptic}
(-\Delta)^su =  u^{p} \quad \text{on } \R^N,
\end{align}
where $p := 2^*_s -1 := \frac{2N}{N-2s} -1 = \frac{N+2s}{N-2s}$, and the fractional Laplace operator $\Ds u$ is defined through the Fourier representation 
\begin{equation}\label{frac lap def Fourier}
(-\Delta)^s u = \mathcal F^{-1} (|\xi|^{2s} \mathcal F u),
\end{equation} 
for any $u \in H^s(\R^N)$, i.e.  such that
\begin{align}\label{eq:fr_norm}
\Vert u \Vert_{L^2(\R^N)} + \left(\int_{\R^N} |\ds u|^2 \dd y \right)^{1/2} < \infty.
\end{align} 
We refer also to \cite{MR2944369,MR3967804,MR4480576} for further background about the fractional Laplacian and fractional Sobolev spaces. 

By \cite[Theorem 1.1]{Chen2006}, a positive function $u > 0$ satisfies  \eqref{eq:elliptic} if and only if $u$ equals the so-called \emph{Talenti bubble}
\begin{equation}\label{eq:talenti-s-bubble}
U [z,\lambda](x) = c_{N,s} \left( \frac{\lambda}{1 + \lambda^2|x-z|^2} \right)^\frac{N-2s}{2}, \qquad x \in \R^N, \end{equation}
for some $\lambda>0$, $z \in \R^N$, and 
$$c_{N,s} := 2^{2s}\left(\frac{\Gamma(\frac{N+2s}{2})}{\Gamma(\frac{N-2s}{2})}\right)^{\frac{N-2s}{4s}}.$$

Equation \eqref{eq:elliptic} corresponds to the Euler-Lagrange equation induced by the fractional Sobolev embedding ${ \dot H^s(\R^N) } \hookrightarrow L^{2^*_s}(\R^N)$. 
By \cite{Lieb1983} (see also \cite[Theorem 1.1]{MR2064421} and, for $s = 1$, the classical papers \cite{MR463908,MR448404}), the best constant in the fractional Sobolev embedding 
\begin{equation}
\label{eq:sob_ineq}
\int_{\R^N} |(-\Delta)^{s/2} u|^2 \dd y \geq S \| u\|_{L^\frac{2N}{N-2s}(\R^N)}^2 
\end{equation}
is given by  
\begin{equation}
\label{eq:sobolevconstant}
S_{N,s} := 2^{2s} \pi^s \frac{\Gamma(\frac{N+2s}{2})}{\Gamma(\frac{N-2s}{2})} \left( \frac{\Gamma(N/2)}{\Gamma(N)}\right)^{2s/N}
\end{equation}
and all optimizers are of the form $c U[z, \lambda]$, for some $c \in \R \setminus\{0\}$, $z \in \R^N$ and $\lambda > 0$. 

{Inequality \eqref{eq:sob_ineq} is valid for functions in the homogeneous Sobolev space $\dot{H}^s(\R^N)$, which is defined as the completion of $C^\infty_0(\R^N)$ with respect to 
\begin{align}\label{eq:h-fr_norm}
\|u\|_{\dot{H}^s(\R^N)} :=  \left(\int_{\R^N} |\ds u|^2 \dd  y \right)^{1/2} = \left(\int_{\R^N} |\widehat{u}(\xi)|^2 |\xi|^{2s} \dd  \xi \right)^{1/2}  .
\end{align} 
}

{If $s = 1$, then $\|u\|_{\dot{H}^1(\R^N)} = \|\nabla u\|_{L^2(\R^N)}$; in particular $\dot{H}^1(\R^N)$ is the completion of $C^\infty_0(\R^N)$ with respect to $\|\nabla u\|_{L^2(\R^N)}$.} 

{We recall that, by \cite[Proposition 3.6]{MR2944369}, if $s \in (0,1)$, we can also write the $\dot{H}^s(\R^N)$-norm as a double integral: namely,
\begin{equation}
    \label{doubleintegral}
    \iint_{\R^N\times \R^N} \frac{|u(x)-u(y)|^{2}}{|x-y|^{N+2 s}} \dd x \dd y = 2C(N,s)^{-1}\|(-\Delta)^{s/2}u\|_{\dot{H}^s(\R^N)}^2,
\end{equation}
with $$
C(N, s)=\left(\int_{\mathbb{R}^N} \frac{1-\cos \left(\zeta_1\right)}{|\zeta|^{N+2 s}} \dd \zeta\right)^{-1}.
$$
As pointed out in \cite{MR3586796}, we have, for some constant $c > 0$, 
$$\iint_{\mathbb{R}^N\times \R^N} \frac{|u(x)-u(y)|^{2}}{|x-y|^{N+2 s}} \dd x \dd y  =  \frac{c + o(1)}{1-s} \int_{\R^N} |\nabla u|^2 \dd x \quad  \text{ as $s \nearrow 1$.} $$
We denote by $\dot{H}^{-s}(\R^N)$ the dual space of $\dot{H}^s(\R^N)$ equipped with the norm 
\[ 
\|u\|_{\dot{H}^{-s}(\R^N)} := \left( \int_{\R^N} u (-\Delta)^{-s} u \dd y \right)^{1/2}=  \left(\int_{\R^N} |\widehat{u}(\xi)|^2 |\xi|^{-2s} \dd  \xi \right)^{1/2}. 
\]
}

\subsection{Stability estimates for critical points with explicit constants}

Once the optimal functions for \eqref{eq:sob_ineq}, resp. solutions to \eqref{eq:elliptic}, are characterized, it is reasonable to look for \emph{stability results}. Loosely speaking, this means that a function that is close to satisfying \eqref{eq:elliptic} (resp. optimizing \eqref{eq:sob_ineq}), in some sense to be made precise, must be close to the manifold
\begin{equation}
    \label{M definition}
    \mathcal M := \{ U[z, \lambda] \, : \, z \in \R^N, \, \lambda > 0 \} \subset \dot{H}^s(\R^N)
\end{equation}
of Talenti bubbles (resp. to the multiple of a Talenti bubble).

A breakthrough result concerning the stability of \eqref{eq:sob_ineq} was obtained by Bianchi and Egnell \cite{MR1124290}, who proved that the `Sobolev deficit' $\|\nabla u\|^2_{L^2(\R^N)} - S_{N,1} \|u\|^2_{L^{2^*}(\R^N)}$ is bounded from below in terms of some natural distance from the manifold of optimizers. More precisely, there exists a constant $c_{BE} > 0$ such that 
\begin{equation}
\label{bianchi egnell}
   R(u) = \frac{\|\nabla u\|^2_{L^2(\R^N)} - S_{N,1} \|u\|^2_{L^{2^*}(\R^N)}}{\dist(u,\widetilde{\mathcal M})^2} \ge c_{BE}  \quad \text{ for all } u \in \dot{H}^1(\R^N) ,
\end{equation}
 where $\widetilde{\mathcal M}$ is the manifold of optimizers of the Sobolev inequality (i.e. Talenti bubbles and their scalar multiples).  Here and in what follows, we denote, for $u \in \dot{H}^s(\R^N)$ and $\Omega \subset \dot{H}^s(\R^N)$, 
 \[\dist(u, \Omega) := \dist_{\dot{H}^s(\R^N)}(u, \Omega) := \inf \left\{ \left(\int_{\R^N} |(-\Delta)^{s/2} (u - v)|^2 \dd y\right)^\frac{1}{2} \, : \, v \in \Omega \right\}.
 \]
The Bianchi--Egnell inequality is the quantitative version  of a previous result by Lions in \cite{MR834360} (namely, that, if the Sobolev deficit is small for some function $f$, then $f$ has to be close to $\widetilde{\mathcal M}$) and answered a question posed by Brezis and Lieb in \cite[p. 75, Question B, point (c)]{MR790771}.

 Later, the result by Bianchi--Egnell was extended to biharmonic (see \cite{MR1694339}) and polyharmonic operators (see \cite{MR2018667}) as well; the fractional case was dealt with in \cite{MR3179693}. 
 
On the other hand, building on the work by Struwe \cite{MR760051}, in more recent contributions a similar quantitative stability analysis has been carried out for solutions to the equation \eqref{eq:elliptic} {for $s = 1$}. This can be seen as a generalization of the Bianchi--Egnell question in the sense that stability around an arbitrary critical point of the Sobolev quotient (instead of only minimizers) is considered. Here, the natural quantity replacing the Sobolev deficit in \eqref{bianchi egnell} is the $H^{-s}$ norm $\|(-\Delta)^s u - u^{2^*_s-1}\|_{\dot{H}^{-s}(\R^N)}$. For $s=1$, this was carried out in \cite{MR3873544} (in case of critical points with energy comparable to one Talenti bubble) and in \cite{MR4090466,2103.15360} (in the multi-bubble case). The fractional counterpart of the main theorems of \cite{MR4090466,2103.15360} has been recently given in \cite{2109.12610}.
 
An exciting open question, on which partial progress has been made recently, is to determine the value of the best constant $c_{BE}$ in \eqref{bianchi egnell}. The recent preprint \cite{2209.08651} contains the first explicit lower bound on $c_{BE}$, while in  \cite{2210.08482} the strict upper bound $c_{BE} < \frac{4}{N+4}$ is derived. We also mention \cite{ChLuTa} for an abstract result on best constants in stability inequalities. 

To the best of our knowledge, the corresponding question of explicit constants in the stability inequalities pertaining to \emph{critical points}, which we just described, has not been investigated so far. Filling this gap is one of the main purposes of the present paper. For critical points having the energy of roughly one Talenti bubble, we identify the explicit spectral constant corresponding to sequences close to $\mathcal M$. By a standard argument, this yields a `global' stability inequality with a non-explicit constant $c_{CP}^1(s)$.  Moreover, we can prove analogously to \cite{2210.08482} that $c_{CP}^1(s)$ must be strictly smaller than the spectral constant. We emphasize that while our results are valid for every fractional exponent $s \in (0, N/2)$, they are new even for $s =1$.

Let us finally mention that, as explained more thoroughly in the introduction of \cite{MR3873544}, almost-critical points $u$ of the Sobolev functional (in the sense that $\|(-\Delta)^s u - u^p\|_{\dot{H}^{-s}(\R^N)}$ is small) can in particular be viewed as metrics on $\mathbb S^N$ with almost constant fractional scalar curvature. For the notion of fractional scalar curvature -- and, generally, for the relevance of the fractional Laplacian in conformal geometry -- we refer to the survey \cite{MR3824214}.

\subsection{Explicit quantitative extinction rate for a fractional fast diffusion equation}
 
In a second step, we apply our findings on explicit stability constants to study the finite-time extinction for the fractional fast diffusion equation
\begin{align}   \label{eq:fde}
\begin{cases}
\partial_t u + (-\Delta)^s (|u|^{m-1}u) = 0, & t>0, \ x \in \R^N,  \\
u(0,x) = u_0(x), & x \in \R^N,
\end{cases}
\end{align}
with {$s \in (0,1)$ and}
\begin{align}\label{eq:m-fde}
    m:= \frac{1}{p} = \frac{1}{2^*_s-1} = \frac{N-2s}{N+2s}.
\end{align}
As shown in \cite[Theorems 2.3, 2.4, and 9.5]{MR2954615}, if {$u_0 \in L^1(\R^N) \cap L^p(\R^N)$ for some $p > q = (2_s^*)' :=  \frac{2N}{N+2s}$}, there exists a unique strong solution of \eqref{eq:fde}, which moreover vanishes in finite time (i.e., it has the \emph{finite-time extinction property}): more precisely, if $u_0$ is non-negative, there exists a $\bar T := T(u_0)$ such that $u(t,x) >0$ in $(0,\bar T)\times \R^N$ and $u(\bar T,\cdot) \equiv 0$. { Following \cite[Definitions 3.1 and 3.5]{MR2954615}, we call here $u$ a \emph{strong solution} of \eqref{eq:fde} if $u \in C([0, \infty); L^1(\R^N))$, $|u|^{m-1} u \in L^2_\text{loc}([0, \infty); \dot{H}^s(\R^N))$, $u(0,\cdot) = u_0$ a.e., $\partial_t u \in L^\infty((\tau, \infty); L^1(\R^N))$ for every $\tau > 0$, and  
\[ \int_{\R^N} \int_0^\infty u \partial_t \varphi   \dd t \dd x = \int_0^\infty \int_{\R^N} \ds |u|^{m-1} u \ds \varphi \dd x \dd t  \]
holds for all $\varphi \in C^1_0((0, \infty) \times \R^N)$. }

In case $s=1$, the extinction profile of \eqref{eq:fde} is studied in \cite{MR1857048}. For $s \in (0,1)$, analogous results are contained in \cite[Theorem 1.3]{MR3276166}: given $u_0 \in C^2(\R^N)$ such that $u_0 \ge 0$ and $|x|^{2s-N} u^m_0(x/|x|^2)$ {can be extended to a positive $C^2$-function in the origin}, there exist $z \in \R^N$ and $\lambda >0$ such that 
\begin{align}\label{eq:asy}
\left\|\frac{u(t,\cdot)}{U_{\bar T, z, \lambda}(t,\cdot)} - 1 \right\|_{L^\infty(\R^N)} \to 0 \qquad \text{ as } t \to \bar T^-,
\end{align}
where 
\begin{align}\label{eq:extinction-prof}
    U_{\bar T,z,\lambda}(t,x):= \left(\frac{p-1}{p}\right)^{\frac{p}{p-1}}(\bar T -t)^{\frac{p}{p-1}} U[z,\lambda]^{p}, \qquad t \in (0,\bar T), \ x \in \R^N,
\end{align}
with $\frac{p}{p-1} = \frac{1}{1-m} = \frac{N+2s}{4s}$.
As an application of our single-bubble stability result, we shall establish a convergence rate of the type \begin{align*}
\left\|\frac{u(t,\cdot)}{U_{\bar T, z, \lambda}(t,\cdot)} - 1 \right\|_{L^\infty(\R^N)} \le C_* (\bar T-t)^{\kappa_{N,s}} \qquad \text{ for all } t \in (0,\bar T),
\end{align*}
where $\kappa_{N,s}$ is a universal constant and $C_* > 0$ is a constant depending additionally on the initial datum $u_0$. This result is proved in \cite[Theorem 5.1]{MR4090466} (see also \cite[Theorem 1.3]{MR3873544}) in case $s=1$.  The attention to the explicit stability constant in the result for the fractional Sobolev inequality yields, in turn, an explicit estimate for the convergence rate in the fractional fast diffusion asymptotics. 

For fast-diffusion exponents in the range $0<m \in (\frac{N-2s}{N+2s},1)$, the problem of studying the convergence rates to the corresponding extinction profile  has also a long history (in case $s=1$): we refer to \cite{MR3307161,MR2481073, MR1986060, MR1940370, MR2246356} for the Cauchy problem in $\R^N$ and to \cite{MR4221933, Akagi2021, 2008.01311, Choi2022} for the equation posed in a bounded domain $\Omega \subset \R^N$.

\subsection{Outline of the paper}
\label{ssec:outline}

In Section \ref{sec:main}, we state our main results: the quantitative stability of the fractional Talenti bubble and the application to the asymptotics for the fractional fast-diffusion equation. Finally, in Sections \ref{sec:proof-stability1} and \ref{sec:yamabe}, we develop the proofs.

\subsection*{Notation}
For a set $M$ and functions $f,g : M \to \R_+$, we shall write $f(m) \lesssim g(m)$ if there exists a constant $C > 0$, independent of $m$, such that $f(m) \leq C g(m)$ for all $m \in M$, and accordingly for $\gtrsim$.

\section{Main results}
\label{sec:main}

\subsection{Quantitative stability of the fractional Sobolev inequality near one Talenti bubble}
\label{ssec:talenti-1}

 {The starting point of our analysis is the following consequence of \cite[Théorème 1.1]{Gerard1998}, which can be seen as a fractional generalization of Struwe's celebrated result \cite[Proposition 2.1]{MR760051} for $s = 1$. A very similar statement for a smooth and bounded set $\Omega \subset \R^N$ appears in \cite[Theorem 1.1]{MR3316602}.}
 

\begin{lemma}[Qualitative stability for one bubble]\label{lm:s-struwe}
Let $N \in \N$, $0 < s < N/2$, and $\{u_k\}_{k \in \mathbb N}$ such that 
\begin{align}
   \label{eq:bound-energy} & \frac{1}{2} S^\frac{N}{2s}_{N,s} \le \int_{\R^N}|(-\Delta)^{s/2} u_k|^{2} \dd x \le \frac{3}{2}S^\frac{N}{2s}_{N,s},\\ 
   &  {u_k \geq 0},  \label{eq:u-nonneg} \\
  \label{eq:ps-conv}  &\|(-\Delta)^s u_k - u_k^{2^*_s-1}\|_{\dot{H}^{-s}(\R^N)} \to 0 \quad \text{ as } k \to +\infty.
\end{align}
Then there exist sequences $\{z_k\}_{k \in \N} \subset \mathbb R^N$ and $\{\lambda_k\}_{k \in \N} \subset (0,\infty)$ such that
$$\|u_k-U[z_k,\lambda_k]\|_{\dot{H}^s(\R^N)} \to 0  \quad \text { as } k \to +\infty.$$ 
\end{lemma}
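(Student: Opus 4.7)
The plan is to derive this statement from the $\dot{H}^s$ profile decomposition of Gérard \cite{Gerard1998}, together with the classification \cite{Chen2006} of non-negative solutions of \eqref{eq:elliptic}. Up to extracting a subsequence, Gérard's theorem yields profiles $V_j \in \dot{H}^s(\R^N)$, concentration points $z_{k,j} \in \R^N$, and scales $\lambda_{k,j} > 0$ such that, setting $g_{k,j}\varphi(x) := \lambda_{k,j}^{(N-2s)/2} \varphi(\lambda_{k,j}(x-z_{k,j}))$,
\[
u_k = \sum_{j=1}^J g_{k,j} V_j + r_k^J, \qquad \|u_k\|_{\dot{H}^s(\R^N)}^2 = \sum_{j=1}^J \|V_j\|_{\dot{H}^s(\R^N)}^2 + \|r_k^J\|_{\dot{H}^s(\R^N)}^2 + o(1),
\]
with $\limsup_{k\to\infty} \|r_k^J\|_{L^{2^*_s}(\R^N)} \to 0$ as $J \to \infty$, and with the usual asymptotic orthogonality of the pairs $(z_{k,j}, \lambda_{k,j})$.

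First I would use the almost-criticality assumption \eqref{eq:ps-conv} together with the scale and translation invariance of $(-\Delta)^s$ to show that each non-trivial profile $V_j$ solves the Euler--Lagrange equation. Rewriting \eqref{eq:ps-conv} in the rescaled variables associated with $g_{k,j}$ and passing to the weak limit in $\dot{H}^s(\R^N)$ yields $(-\Delta)^s V_j = V_j^{2^*_s-1}$ on $\R^N$, the delicate point being that the nonlinear term passes to the limit thanks to the local $L^{2^*_s}$-compactness inherent in the profile decomposition. Non-negativity is preserved under weak limits, so \eqref{eq:u-nonneg} forces $V_j \ge 0$, and the classification \cite[Theorem 1.1]{Chen2006} then implies that each non-trivial profile is a Talenti bubble; in particular $\|V_j\|_{\dot{H}^s(\R^N)}^2 = S_{N,s}^{N/(2s)}$.

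The energy bounds \eqref{eq:bound-energy} determine the number of profiles exactly. The upper bound together with the Pythagorean identity prevents more than one non-trivial profile, since two of them would already contribute at least $2S_{N,s}^{N/(2s)} > \tfrac{3}{2}S_{N,s}^{N/(2s)}$. Conversely, if every profile were trivial, then $u_k \to 0$ in $L^{2^*_s}(\R^N)$, and pairing \eqref{eq:ps-conv} with $u_k$ would give $\|u_k\|_{\dot{H}^s(\R^N)}^2 \to 0$, contradicting the lower bound. Hence there is exactly one non-trivial profile and we have $u_k = U[z_k,\lambda_k] + r_k$ with $r_k \to 0$ in $L^{2^*_s}(\R^N)$.

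It remains to upgrade this to convergence in $\dot{H}^s(\R^N)$. I would test the identity
\[
(-\Delta)^s r_k = \bigl((-\Delta)^s u_k - u_k^{2^*_s-1}\bigr) + \bigl(u_k^{2^*_s-1} - U[z_k,\lambda_k]^{2^*_s-1}\bigr)
\]
against $r_k$. The first contribution is $o(\|r_k\|_{\dot{H}^s(\R^N)})$ by \eqref{eq:ps-conv}, and the elementary inequality $|a^{2^*_s-1} - b^{2^*_s-1}| \lesssim |a-b|\bigl(a^{2^*_s-2}+b^{2^*_s-2}\bigr)$ combined with Hölder's inequality controls the second by $C\|r_k\|_{L^{2^*_s}(\R^N)}^2 = o(1)$. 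The resulting bound $\|r_k\|_{\dot{H}^s(\R^N)}^2 \le o(1) + o(1)\|r_k\|_{\dot{H}^s(\R^N)}$, together with the boundedness of $\|r_k\|_{\dot{H}^s(\R^N)}$, forces $\|r_k\|_{\dot{H}^s(\R^N)} \to 0$. The main technical obstacle is the invocation of Gérard's fractional profile decomposition and the verification that the Palais--Smale condition transfers correctly to every profile; once this is in place, the identification of the profiles with Talenti bubbles and the control of the remainder are relatively standard.
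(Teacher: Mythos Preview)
Your proposal is correct and follows essentially the same route as the paper: G\'erard's profile decomposition, identification of each profile as a non-negative solution of \eqref{eq:elliptic} via the Palais--Smale condition, classification from \cite{Chen2006}, energy counting to force a single bubble, and an upgrade from $L^{2^*_s}$ to $\dot{H}^s$ convergence. The only cosmetic differences are that the paper deduces $V_j\ge 0$ \emph{after} the energy counting (from $L^{2^*_s}$ convergence of $u_k$ to the single profile) rather than directly from weak limits, and carries out the $\dot{H}^s$ upgrade by pairing with $u_k-\eta_k$ and invoking $L^{2^*_s}$ convergence, whereas you use the pointwise inequality $|a^{2^*_s-1}-b^{2^*_s-1}|\lesssim |a-b|(a^{2^*_s-2}+b^{2^*_s-2})$ and H\"older; both are equally valid.
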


{We defer the proof of Lemma \ref{lm:s-struwe} to Section \ref{sec:proof-stability1} below.} 

\begin{remark}[Palais-Smale condition]
The conditions \eqref{eq:bound-energy} and \eqref{eq:ps-conv} express the fact that $\{u_k\}_{k \in \N}$ is a Palais-Smale sequence for the functional 
$$J(u) = \frac{1}{2}\int_{\R^N} |(-\Delta)^{s/2}u|^2 \dd x - \frac{1}{2^*_s} \int_\Omega |u|^{2^*_s} \dd x.$$
\end{remark}

Our first main result is the following quantitative stability estimate for single fractional Talenti bubbles.

We start by stating a version of the stability estimate that only applies to sequences satisfying the full set of hypotheses of Lemma \ref{lm:s-struwe}, in particular the Palais-Smale type condition \eqref{eq:ps-conv}. In exchange, for these sequences, we can estimate the stability constant explicitly. Besides being of interest of itself, this allows us to get an explicit estimate for the convergence rate in our second main result, Theorem \ref{th:main-fde} below.  

To state our result, we introduce the space
\begin{equation}
    \label{definition Txl}
     T_\xl = \text{span} \left \{ U[x,\lambda], \pl U[x,\lambda], \{\pxi U[x,\lambda]\}_{i=1}^N \right\} \subset \dot{H}^s(\R^N) 
\end{equation}
and denote by $T_\xl^\perp \subset \dot{H}^s(\R^N)$ its orthogonal complement in $\dot{H}^s(\R^N)$ with respect to the scalar product $\langle u,v \rangle_{\dot{H}^s(R^N)} := \int_{\R^N} (-\Delta)^{s/2} u (-\Delta)^{s/2} v \dd y$. 
Notice that $U[x, \lambda]$, $\partial_\lambda U[x, \lambda]$, and $\partial_{x_i} U[x,\lambda]$ are pairwise orthogonal with respect to $\langle \emptyparam, \emptyparam \rangle_{\dot{H}^s(R^N)}$. 

\begin{theorem}[Quantitative stability for Palais-Smale sequences]
\label{th:main-stability-explicit-constant}
Let $N \in \N$, $0 < s < N/2$, and $\{u_k\}_{k \in \N}$ satisfy \eqref{eq:bound-energy}-\eqref{eq:ps-conv}. Then 
\begin{equation}
    \label{asymptotic inequality thm}
    \liminf_{k \to \infty} \frac{ \|(-\Delta)^s u_k - u_k^{2^*_s-1}\|_{\dot{H}^{-s}(\R^N)}  }{\dist(u_k, \mathcal M) } \geq \frac{4s}{N+2s+2} =: \gamma_{N,s} . 
\end{equation} 
Moreover, equality holds if and only if the following statement holds: given $z_k$ and $\lambda_k$ such that $\|u_k - U[z_k, \lambda_k]\|_{\dot{H}^s(\R^N)} = \dist(u_k, \mathcal M)$ (see Lemma \ref{lemma optimal z lambda}), then the unique functions $v_k \in T_{z_k, \lambda_k}$ and $\rho_k \in T_{z_k, \lambda_k}^\bot$ such that $u_k - U[z_k, \lambda_k] = v_k + \rho_k$ satisfy (up to taking a subsequence) 
\begin{align}
    \label{optimality condition 1 thm}
    &\|v_k\|_{\dot{H}^{s}(\R^N)} = o(\| \rho_k\|_{\dot{H}^{s}(\R^N)}), \\
    &\frac{\norm{\rho_k}_{\dot{H}^{s}(\R^N)}^2 - p \int_{\R^N} U[z_k,\lambda_k]^{p-1} \rho_k^2 \dd y}{\norm{\rho_k}_{\dot{H}^{s}(\R^N)}^2}  \to  \frac{4s}{N+2s+2}  . \label{optimality condition 2 thm}
\end{align} 
\end{theorem}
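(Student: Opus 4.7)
My plan is to work with the optimal orthogonal decomposition of $u_k$ around the manifold $\mathcal M$, and to exploit the Riesz identification $\dot H^{-s}(\R^N) \cong \dot H^s(\R^N)$ via $(-\Delta)^s$. By Lemma~\ref{lm:s-struwe} together with a standard continuity argument (as invoked in the statement), I select $z_k, \lambda_k$ realizing $\dist(u_k, \mathcal M)$, set $U_k := U[z_k,\lambda_k]$ and $w_k := u_k - U_k$, and note that $w_k \to 0$ in $\dot H^s(\R^N)$. First-order minimality forces $w_k$ to be $\dot H^s(\R^N)$-orthogonal to $\partial_\lambda U_k$ and to each $\partial_{x_i} U_k$; so the orthogonal decomposition $w_k = v_k + \rho_k$ with $v_k \in T_{z_k,\lambda_k}$ and $\rho_k \in T_{z_k,\lambda_k}^\perp$ actually satisfies $v_k = \alpha_k U_k$ for some scalar $\alpha_k$, and $\dist(u_k,\mathcal M)^2 = \|v_k\|_{\dot H^s(\R^N)}^2 + \|\rho_k\|_{\dot H^s(\R^N)}^2$.

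The analytical heart of the proof is the operator $K_k\phi := p(-\Delta)^{-s}(U_k^{p-1}\phi)$, which is self-adjoint and compact on $\dot H^s(\R^N)$ and satisfies $\langle K_k\phi,\psi\rangle_{\dot H^s(\R^N)} = p\int_{\R^N} U_k^{p-1}\phi\psi \dd y$. Via the Riesz isomorphism, the linearized operator $L_k := (-\Delta)^s - pU_k^{p-1}$ obeys
$$ \|L_k\phi\|_{\dot H^{-s}(\R^N)} = \|(I-K_k)\phi\|_{\dot H^s(\R^N)}. $$
The spectrum of $K_k$ is known for fractional Talenti bubbles (via the stereographic transfer to the conformal fractional Laplacian on $\mathbb S^N$, whose eigenvalues are explicit ratios of Gamma functions on each spherical-harmonic subspace): the top eigenvalue is $p$, with eigenfunction $U_k$; the next is $1$, with eigenspace $\mathrm{span}\{\partial_\lambda U_k,\partial_{x_i} U_k\}$; and the third is $1-\gamma_{N,s} = (N-2s+2)/(N+2s+2)$. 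Consequently $T_{z_k,\lambda_k}$ and $T_{z_k,\lambda_k}^\perp$ are invariant under $K_k$, and the largest eigenvalue of $K_k$ on $T_{z_k,\lambda_k}^\perp$ is exactly $1-\gamma_{N,s}$.

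Next I expand the Palais--Smale defect as $(-\Delta)^s u_k - u_k^p = L_k w_k - N(w_k)$, where $N(w_k) := (U_k+w_k)^p - U_k^p - pU_k^{p-1}w_k$. Standard pointwise bounds for the Taylor remainder ($|N(w_k)| \lesssim |w_k|^p$ when $p\le 2$, and direct Taylor expansion when $p>2$) combined with the dual embedding $L^{(2^*_s)'}(\R^N)\hookrightarrow\dot H^{-s}(\R^N)$ and $\|w_k\|_{L^{2^*_s}(\R^N)}\to 0$ yield $\|N(w_k)\|_{\dot H^{-s}(\R^N)} = o(\|w_k\|_{\dot H^s(\R^N)})$. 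Using the $K_k$-invariance of the splitting,
$$ \|L_k w_k\|_{\dot H^{-s}(\R^N)}^2 = (p-1)^2\|v_k\|_{\dot H^s(\R^N)}^2 + \|(I-K_k)\rho_k\|_{\dot H^s(\R^N)}^2 \ge (p-1)^2\|v_k\|_{\dot H^s(\R^N)}^2 + \gamma_{N,s}^2\|\rho_k\|_{\dot H^s(\R^N)}^2. $$
Since $p-1 = 4s/(N-2s) > \gamma_{N,s}$, the right-hand side dominates $\gamma_{N,s}^2\dist(u_k,\mathcal M)^2$, and the $\liminf$ inequality \eqref{asymptotic inequality thm} follows after absorbing the nonlinear error.

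For the equality characterization, the strict gap $(p-1)^2 > \gamma_{N,s}^2$ forces $\|v_k\|_{\dot H^s(\R^N)} = o(\|\rho_k\|_{\dot H^s(\R^N)})$ whenever equality holds, which gives \eqref{optimality condition 1 thm}; saturation of the spectral inequality simultaneously forces $\rho_k$ to concentrate on the $(1-\gamma_{N,s})$-eigenspace of $K_k$. Expanding $\rho_k = \sum_j \rho_{k,j}$ along $K_k$-eigenfunctions with eigenvalues $\kappa_{k,j}$ and using $Q_k(\rho_{k,j}) = (1-\kappa_{k,j})\|\rho_{k,j}\|_{\dot H^s(\R^N)}^2$ for $Q_k(\rho) := \|\rho\|_{\dot H^s(\R^N)}^2 - p\int_{\R^N} U_k^{p-1}\rho^2 \dd y$, this concentration is equivalent to the Rayleigh-quotient condition $Q_k(\rho_k)/\|\rho_k\|_{\dot H^s(\R^N)}^2 \to \gamma_{N,s}$, which is \eqref{optimality condition 2 thm}. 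The converse is a direct computation running the argument in reverse. The main technical obstacles will be the sharp remainder estimate for $N(w_k)$ uniformly across $0 < s < N/2$, since $p = 2^*_s-1$ can drop below $2$, and the precise identification of the third eigenvalue of $K_k$ together with the gap above it.
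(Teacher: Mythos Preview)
Your proposal is correct and follows essentially the same route as the paper's proof: both reduce to the optimal bubble, decompose $u_k - U_k$ into a multiple of $U_k$ plus a piece in $T_{z_k,\lambda_k}^\perp$, and control the defect via the spectrum of the linearized operator, exploiting that $(p-1) > \gamma_{N,s}$ and that $\gamma_{N,s} = 1-\mu_2^{-1}$ is the smallest value of $1-\mu_i^{-1}$ on $T^\perp$. Your packaging via the identity $\|L_k\phi\|_{\dot H^{-s}} = \|(I-K_k)\phi\|_{\dot H^s}$ and the single remainder $N(w_k)$ is a bit more streamlined than the paper's direct Taylor expansion of the three integrals $\|u\|_{\dot H^s}^2$, $\int u^{p+1}$, and $\int u^p(-\Delta)^{-s}u^p$, but the spectral content and the equality analysis are identical (your $K_k$ is the paper's $\mathcal L^{-1}$, and your $(1-\kappa_j)$ are the paper's $(1-\mu_i^{-1})$).
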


The corresponding asymptotic inequality in the Bianchi--Egnell setting, i.e. 
\begin{equation}
    \label{asymptotic inequality bianchi egnell}
    \liminf_{k \to \infty} \frac{\| \ds u\|^2_{\dot{H}^s(\R^N)} - S_{N,s} \|u\|^2_{2^*_s}}{\text{dist}(u_k, \widetilde{\mathcal M})^2} \geq \gamma_{N,s}, 
\end{equation}
for all sequences $\{u_k\}_{k \in \N}$ such that $\text{dist}(u_k, \widetilde{\mathcal M}) \to 0$ as $k \to \infty$, is well-known; see, e.g., \cite[Proposition 2]{MR3179693} for an equivalent statement on $\mathbb S^N$. It can  be proved by means of the coercivity inequality from Lemma \ref{lm:coercivity}. While this inequality is still at the core of the proof of Theorem \ref{th:main-stability-explicit-constant}, the proof as a whole becomes somewhat more involved due to the presence of the $\dot{H}^{-s}$ norm in \eqref{asymptotic inequality thm}. 

Note that the condition \eqref{optimality condition 2 thm} means precisely that $\rho_k$ optimizes asymptotically the coercivity inequality \eqref{coerc_ineq_rey}. 

It is curious to remark that the constant $\gamma_{N,s}$ in inequalities \eqref{asymptotic inequality thm} and \eqref{asymptotic inequality bianchi egnell} are the same, even though the left side of \eqref{asymptotic inequality bianchi egnell} is quadratic, but that of \eqref{asymptotic inequality thm} is not. What is morally at the origin of this is the fact   that, in \eqref{asymptotic inequality bianchi egnell}, the quotient is between an $\dot{H}^{-s}$ norm and a $\dot{H}^s$ norm, whereas in \eqref{asymptotic inequality bianchi egnell} it is between two $\dot{H}^s$ norms. For more details, see the proof of Theorem \ref{th:main-stability-explicit-constant}. 

From Theorem \ref{th:main-stability-explicit-constant} we can deduce, by a compactness argument, the following general stability result for all functions satisfying the bound \eqref{eq:bound-energy}. 

\begin{theorem}[Quantitative stability: single bubble case]\label{th:main-stability1}
Let $N \in \N$ and $0 < 2s < N$.  
Then, 
\begin{align}
\label{error bound thm}
  c^1_{CP}(s):= \inf_{u \in \mathcal B} \frac{ \|(-\Delta)^s u - u^{2^*_s-1}\|^2_{\dot{H}^{-s}(\R^N)}  }{\dist(u, \mathcal M)^2 } > 0, 
\end{align}
where $S_{N,s}$ is defined in \eqref{eq:sobolevconstant} and 
\begin{align*}
    \mathcal B:= \left\{u \in \dot H^s(\R^N): u \ge 0, \  \frac{1}{2} S^\frac{N}{2s}_{N,s} \le \|u\|_{\dot H^s(\mathbb R^N)}^2 \le \frac{3}{2}S_{N,s}^\frac{N}{2s}\right\}.
\end{align*}
Furthermore, if $N\ge 2$, then $c^1_{CP}(s) < \gamma_{N,s}^2$.
\end{theorem}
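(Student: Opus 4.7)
The proof splits into two parts: the positivity $c^1_{CP}(s) > 0$, via a compactness argument, and the strict inequality $c^1_{CP}(s) < \gamma_{N,s}^2$ for $N \ge 2$, via an explicit test function. For the positivity I would argue by contradiction: suppose $\{u_k\} \subset \mathcal B$ is a sequence along which the ratio vanishes. If $\dist(u_k, \mathcal M) \ge \delta > 0$ along a subsequence, then the numerator $\|(-\Delta)^s u_k - u_k^p\|_{\dot H^{-s}(\R^N)}$ tends to zero; since $\{u_k\} \subset \mathcal B$ supplies the hypotheses \eqref{eq:bound-energy}--\eqref{eq:ps-conv}, Lemma \ref{lm:s-struwe} produces $(z_k, \lambda_k)$ with $\|u_k - U[z_k, \lambda_k]\|_{\dot H^s(\R^N)} \to 0$, contradicting $\dist(u_k, \mathcal M) \ge \delta$. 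If instead $\dist(u_k, \mathcal M) \to 0$, then the numerator still tends to zero and Theorem \ref{th:main-stability-explicit-constant} gives $\liminf(\text{ratio}) \ge \gamma_{N,s}^2 > 0$, again a contradiction.

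For the strict inequality when $N \ge 2$, I plan to construct an explicit $u_\epsilon$ whose ratio falls below $\gamma_{N,s}^2$. Fix $U := U[0, 1]$ and introduce the self-adjoint operator $A : \dot H^s(\R^N) \to \dot H^s(\R^N)$, $Av := v - p(-\Delta)^{-s}(U^{p-1}v)$, which satisfies the factorization $(-\Delta)^s A v = L v := (-\Delta)^s v - p U^{p-1} v$ and whose smallest positive eigenvalue on $T_{0,1}^\perp$ equals $\gamma_{N,s}$ (saturating the coercivity inequality of Lemma \ref{lm:coercivity}). Under inverse stereographic projection $\R^N \to \mathbb S^N$, the corresponding eigenspace corresponds to the degree-$2$ spherical harmonics on $\mathbb S^N$. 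I pick an eigenfunction $\rho \in T_{0,1}^\perp$ with $A\rho = \gamma_{N,s} \rho$ satisfying
\[
\int_{\R^N} U^{p-2} \rho^3 \dd y \ne 0.
\]
Such a $\rho$ exists precisely when $N \ge 2$: a conformal change of variables transforms this integral into a positive multiple of $\int_{\mathbb S^N} Y^3 \dd \sigma$ for the corresponding degree-$2$ spherical harmonic $Y$, and for $N \ge 2$ one may take $Y(\omega) = \omega_1^2 - (N+1)^{-1}$, whose cube has non-vanishing $\mathbb S^N$-integral by an elementary moment computation. The construction fails for $N = 1$, where every degree-$2$ harmonic on $\mathbb S^1$ is a linear combination of $\cos 2\theta$ and $\sin 2\theta$ and thus has vanishing cube integral by parity.

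Setting $u_\epsilon := U + \epsilon \rho$, orthogonality of $\rho$ to the tangent space of $\mathcal M$ at $U$ together with tubular-neighborhood uniqueness of the nearest-point projection gives, for $|\epsilon|$ small,
\[
\dist(u_\epsilon, \mathcal M)^2 = \epsilon^2 \|\rho\|_{\dot H^s(\R^N)}^2.
\]
Taylor-expanding the nonlinearity yields
\[
(-\Delta)^s u_\epsilon - u_\epsilon^p = \epsilon\, L\rho - \tfrac{p(p-1)}{2}\epsilon^2 U^{p-2}\rho^2 + O(\epsilon^3),
\]
and the identities $\|L\rho\|_{\dot H^{-s}(\R^N)}^2 = \gamma_{N,s}^2 \|\rho\|_{\dot H^s(\R^N)}^2$ and $\langle L\rho, U^{p-2}\rho^2\rangle_{\dot H^{-s}(\R^N)} = \gamma_{N,s} \int_{\R^N} U^{p-2}\rho^3 \dd y$ (both following from $(-\Delta)^{-s} L\rho = A\rho = \gamma_{N,s}\rho$) produce
\[
\frac{\|(-\Delta)^s u_\epsilon - u_\epsilon^p\|_{\dot H^{-s}(\R^N)}^2}{\dist(u_\epsilon, \mathcal M)^2} = \gamma_{N,s}^2 - \frac{p(p-1) \gamma_{N,s} \int_{\R^N} U^{p-2}\rho^3 \dd y}{\|\rho\|_{\dot H^s(\R^N)}^2}\,\epsilon + O(\epsilon^2).
\]
Choosing the sign of $\epsilon$ so that the linear coefficient is negative pushes the ratio strictly below $\gamma_{N,s}^2$ for small $|\epsilon|$. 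One checks $u_\epsilon \in \mathcal B$: $u_\epsilon \ge 0$ since $|\rho|/U \in L^\infty(\R^N)$ (both decay like $|x|^{-(N-2s)}$ at infinity), and $\|u_\epsilon\|_{\dot H^s(\R^N)}^2 = S_{N,s}^{N/(2s)} + \epsilon^2 \|\rho\|_{\dot H^s(\R^N)}^2$ since $\rho \perp U$, which lies in the required energy range. The principal technical hurdle is thus the algebraic non-vanishing of $\int_{\R^N} U^{p-2} \rho^3 \dd y$, reduced via the stereographic projection to a spherical-harmonic moment identity — the source of the dimensional restriction $N \ge 2$.
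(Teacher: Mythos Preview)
Your proof is correct and follows essentially the same route as the paper: positivity via the compactness/Struwe-type dichotomy combined with Theorem~\ref{th:main-stability-explicit-constant}, and strictness via a third-order expansion of the quotient at $u_\epsilon = U + \epsilon\rho$ with $\rho$ in the first nontrivial eigenspace of the linearized operator, reducing to the non-vanishing of $\int_{\R^N} U^{p-2}\rho^3$. The only noteworthy variation is your choice of degree-$2$ spherical harmonic $Y(\omega)=\omega_1^2-(N+1)^{-1}$ in place of the paper's $Y(\omega)=\omega_1\omega_2+\omega_2\omega_3+\omega_3\omega_1$; both yield $\int_{\mathbb S^N}Y^3\,\mathrm d\sigma\neq 0$ precisely when $N\ge 2$, so the dimensional restriction arises for the same reason.
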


{
\begin{remark}[Non-negativity assumption]
    We point out that, when $s \in (0,1]$, then Lemma \ref{lm:s-struwe} and Theorems \ref{th:main-stability-explicit-constant} and \ref{th:main-stability1} (with $u^p$ replaced by $|u|^{p-1} u$) also hold without the assumption that $u_k$, resp. $u$, is non-negative (indeed, the proofs  go through unchanged). On the other hand, in the proof of Lemma \ref{lm:s-struwe} the energy bound \eqref{eq:bound-energy} keeps $u_k$ from approaching a sign-changing solution. This is explained in more detail in Remark \ref{remark sign-changing} below.
\end{remark} }

As already mentioned above, the analogue of the strict inequality $c^1_{CP}(s) < \gamma_{N,s}^2$ for the fractional Bianchi--Egnell constant  in \eqref{asymptotic inequality bianchi egnell} was recently proved in \cite{2210.08482}. The key idea of the proof -- namely, a \emph{third order} expansion of the quotient together with a clever choice of a test function -- works here as well. {It is noteworthy and somewhat curious that this choice of test function (see \eqref{test function v}) requires the additional assumption $N \geq 2$. Whether $c^1_{CP}(s) < \gamma_{N,s}^2$ when $N = 1$ and $s \in (0, 1/2)$ remains unclear to us, analogously to the situation studied in \cite{2210.08482}.} 

The study of the behavior of the best constant $c^k_{CP}(s)$ for the analogous problem with higher energies $\frac{2k-1}{2} S^\frac{N}{2s}_{N,s} \le \|u\|_{H^s(\mathbb R^N)}^2 \le \frac{2k+1}{2}S_{N,s}^\frac{N}{2s}$, for any $k \geq 2$, seems to be an interesting open question. {This setting corresponds to functions $u \geq 0$ which look approximately like $k$ weakly interacting Talenti bubbles. Since $k$ such bubbles form a very good approximation to a solution of \eqref{eq:elliptic}, it is again of interest to control the ratio $\inf_{u \in \mathcal B} \|(-\Delta)^s u - u^{2^*_s-1}\|^2_{\dot{H}^{-s}(\R^N)}  \dist(u, \mathcal M_k)^{-\alpha}$, for a suitable power $\alpha$ and $\mathcal M_k$ denoting the manifold of sums of $k$ Talenti bubbles. Surprisingly, it has been found in \cite{MR4090466, 2103.15360, 2109.12610} that the suitable exponent $\alpha$ is different from $2$ in general. What seems to remain completely unclear for now is the analogue of Theorem \ref{th:main-stability-explicit-constant}, i.e. to find the best asymptotic constant (replacing $\gamma_{N,s}$) is for sequences $\{u_n\}_{n \in \N}$ converging to $\mathcal M_k$, and to characterize the $\{u_n\}_{n \in \N}$ achieving this constant.} We hope to come back to this question in future work.

\subsection{Application to the study of fractional fast-diffusion equations}
\label{ssec:fast}

As an application of Theorem \ref{th:main-stability1}, we study the quantitative convergence to equilibrium of the fast diffusion equation \eqref{eq:fde}. In contrast to the previous section, here we need to specify that the fractional order satisfies $s\in (0, \min\{1, N/2\}$).

Our starting point is the following result on the extinction profiles of \eqref{eq:fde} (contained in \cite[Theorem 1.3]{MR3276166}, which is the fractional counterpart of \cite[Theorem 1.1]{MR1857048}). Conceptually, it plays the same role as Lemma \ref{lm:s-struwe} in the proof of Theorem \ref{th:main-stability1}.

\begin{lemma}[Extinction profile of the fractional fast diffusion equation]\label{lm:s-pino}
Let $N \in \N$ and $s \in (0,\min\{1,N/2\})$. Let ${0 \not \equiv u_0} \in C^2(\R^N)$ such that $u_0 \ge 0$ and $|x|^{2s-N} u^m_0({x}/{|x|^2})$ can be extended to a positive $C^2$ function in the origin. Then, there exist an extinction time $\bar T = T(u_0) \in (0,\infty)$ such that the solution $u$ of \eqref{eq:fde} satisfies $u(t,x) >0$ for $t \in (0,\bar T)$ and $u(\bar T, \cdot) \equiv 0$. Moreover, there exist $z \in \R^N$ and $\lambda >0$ such that 
\begin{align}\label{eq:asy}
\left\|\frac{u(t,\cdot)}{U_{\bar T, z, \lambda}(t,\cdot)} - 1 \right\|_{L^\infty(\R^N)} \to 0 \quad \text{ as } t \to \bar T^-,
\end{align}
where 
\begin{align}\label{eq:extinction-prof}
    U_{\bar T,z,\lambda}(t,x):= \left(\frac{p-1}{p}\right)^{\frac{p}{p-1}}(\bar T -t)^{\frac{p}{p-1}} U[z,\lambda]^{p}(x), \qquad t \in (0, \bar T), \ x \in \R^N,
\end{align}
with $\frac{p}{p-1} = \frac{1}{1-m} = \frac{N+2s}{4s}$.
\end{lemma}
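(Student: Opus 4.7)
Since Lemma \ref{lm:s-pino} is cited as \cite[Theorem 1.3]{MR3276166}, the plan is to reconstruct its key steps using the tools available in our paper, taking advantage of Lemma \ref{lm:s-struwe} to identify the limiting profile. The proof breaks naturally into three stages: (a) finite-time extinction; (b) a self-similar rescaling converting the extinction problem into a convergence-to-equilibrium problem; and (c) identification of the limit as a Talenti bubble.

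For stage (a), I would test \eqref{eq:fde} against $u^m$ and integrate by parts in space to obtain
\[
\frac{1}{m+1} \frac{\mathrm d}{\mathrm d t} \int_{\R^N} u^{m+1} \dd x = -\|\ds u^m\|_{L^2(\R^N)}^2.
\]
The critical choice $m = (N-2s)/(N+2s)$ from \eqref{eq:m-fde} enforces the algebraic identity $m \cdot 2^*_s = m+1$, so applying the fractional Sobolev inequality \eqref{eq:sob_ineq} to $u^m$ yields $\|\ds u^m\|_{L^2}^2 \geq S_{N,s} \|u\|_{L^{m+1}}^{2m}$. Setting $f(t) := \int_{\R^N} u^{m+1} \dd x$, this gives $f'(t) \leq -c f(t)^{2m/(m+1)}$ with exponent $2m/(m+1) = (N-2s)/N < 1$, forcing $f$ (and hence $u$) to vanish at some finite $\bar T > 0$. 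Strict positivity of $u$ on $(0, \bar T) \times \R^N$ follows from the strong minimum principle for the fractional fast diffusion equation, as established in \cite{MR2954615}.

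For stages (b) and (c), I would introduce the self-similar variables $\tau := -\log(\bar T - t)$ and $v(\tau, x) := (\bar T - t)^{-p/(p-1)} u(t, x)$. A direct computation shows that $v$ satisfies the autonomous equation $\partial_\tau v + \Ds v^m = \alpha v$ with $\alpha := p/(p-1) = 1/(1-m)$, whose stationary non-negative solutions are precisely $v_* = c_* U[z, \lambda]^p$ with $c_* = ((p-1)/p)^{p/(p-1)}$, as one verifies using \eqref{eq:elliptic} together with the identity $p m = 1$. The hypothesis that $|x|^{2s-N} u_0^m(x/|x|^2)$ extends to a positive $C^2$ function at the origin is exactly the condition needed, via the Kelvin-type inversion, to place the initial datum in a good function class on a compactification of $\R^N$, yielding uniform $C^2$-bounds on $v(\tau, \cdot)$ for large $\tau$ by the parabolic regularity theory developed in \cite{MR2954615, MR3276166}. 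The natural entropy/dissipation structure of the rescaled equation then supplies a Lyapunov functional, bounded below and monotone in $\tau$; along any subsequence $\tau_k \to \infty$, one extracts a limit $v_\infty$ whose $m$-th power $v_\infty^m$ is a non-negative critical point of the fractional Sobolev quotient in the energy range \eqref{eq:bound-energy} (guaranteed by the uniform bounds on $v$). Lemma \ref{lm:s-struwe} then forces $v_\infty = c_* U[z, \lambda]^p$ for some $(z, \lambda)$.

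The main obstacle I anticipate is upgrading this subsequential convergence to full-sequence convergence in the relative $L^\infty$ norm \eqref{eq:asy}, equivalently, ruling out drift of $v(\tau, \cdot)$ along the non-compact manifold $\mathcal M$ from \eqref{M definition}. I would handle this via a \L{}ojasiewicz--Simon inequality for the entropy functional at Talenti bubbles, or equivalently by exploiting the conformal monotonicity of the fractional Yamabe-type flow as in \cite{MR3276166}. Once $\dot H^s$-convergence of $v^m$ is established, the passage to pointwise convergence of $v/v_*$ follows from the uniform $C^2$-bounds obtained in step (b).
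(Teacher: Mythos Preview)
The paper does not give a proof of Lemma~\ref{lm:s-pino}; it is quoted verbatim from \cite[Theorem~1.3]{MR3276166} and used as a black box throughout Section~\ref{sec:yamabe}. So there is no ``paper's own proof'' to compare your proposal against.

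Your outline is a reasonable reconstruction of the strategy in \cite{MR3276166}. Stage~(a) is correct and standard. For stages~(b)--(c), two remarks. First, the actual argument in \cite{MR3276166} does not go through a \L{}ojasiewicz--Simon inequality; it transfers the flow to $\mathbb S^N$ via stereographic projection (this is exactly where the hypothesis on $|x|^{2s-N}u_0^m(x/|x|^2)$ enters, turning the problem into one on a compact manifold), obtains uniform $C^{2,\alpha}$ bounds from parabolic regularity for the fractional Yamabe-type flow, and then combines energy monotonicity with the classification of stationary solutions to get full-sequence $L^\infty$ convergence. Second, your invocation of Lemma~\ref{lm:s-struwe} to identify the limit is slightly off: that lemma requires the Palais--Smale condition $\|(-\Delta)^s w - w^p\|_{\dot H^{-s}} \to 0$, which is not what the entropy dissipation directly controls. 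In the present paper the logic runs the other way: Lemma~\ref{lm:s-pino} is assumed, and Lemma~\ref{lm:s-struwe} is then used in Step~2 of the proof of Theorem~\ref{th:main-fde} to upgrade the $L^{2^*_s}$ convergence supplied by Lemma~\ref{lm:s-pino} to $\dot H^s$ convergence.
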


\begin{remark}[Explicit solutions in separated variables with extinction in finite time]
As observed in \cite[Remark, p. 1275]{MR2954615}, $U_{\bar T, z, \lambda}(t,\cdot)$ is an explicit
family of solutions of \eqref{eq:fde}-\eqref{eq:m-fde} in separated variables with extinction in finite time.
\end{remark}

Next, we state our result on the rates of convergence to equilibrium for fractional fast diffusion equations. 

\begin{theorem}[Convergence to equilibrium for fractional fast diffusion equations]\label{th:main-fde}
Let $N \in \N$ and $s \in (0,\min\{1,N/2\})$. Let ${0 \not \equiv u_0} \in C^2(\R^N)$ such that $u_0 \ge 0$ and $|x|^{2s-N} u^m_0(x/|x|^2)$ can be extended to a positive $C^2$ function in the origin. Then, there exist an extinction time $\bar T = T(u_0) \in (0,\infty)$ such that the solution $u$ of \eqref{eq:fde} satisfies $u(t,x) >0$ for $t \in (0,\bar T)$ and $u(\bar T, \cdot) \equiv 0$. Moreover, there exist $z \in \R^N$ and $\lambda >0$, such that 
\begin{align}\label{eq:asy-q}
\left\|\frac{u(t,\cdot)}{U_{\bar T, z, \lambda}(t,\cdot)} - 1 \right\|_{L^\infty(\R^N)} \le C_* (\bar T-t)^{\kappa} \qquad \forall \, t \in (0,\bar T),
\end{align}
for some $C_* > 0$ depending on the initial datum $u_0$ and all $\kappa < \kappa_{N,s}$, where
\begin{equation}
    \label{kappa definition}
    \kappa_{N,s} := \frac{1}{(N+2-2s)(p-1)} \gamma_{N,s}^2,
\end{equation}
with $p = \frac{N+2s}{N-2s}$ and $\gamma_{N,s} = \frac{4s}{N+2s+2}$ as in Theorem \ref{th:main-stability-explicit-constant}. 
\end{theorem}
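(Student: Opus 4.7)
The plan is to adapt to the fractional setting the strategy of Figalli--Glaudo \cite[Theorem 5.1]{MR4090466} and Ciraolo--Figalli--Maggi \cite[Theorem 1.3]{MR3873544}, invoking the sharp spectral stability provided by Theorem \ref{th:main-stability-explicit-constant} to identify the explicit exponent $\kappa_{N,s}$.

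\textbf{Self-similar rescaling.} The first step is to recast \eqref{eq:fde} in self-similar variables. Set $\tau := -\log(\bar T - t)$ and $v(\tau, x) := (\bar T - t)^{-1/(1-m)} u(t,x)$, so that the evolution transforms into
\[\partial_\tau v \;=\; \frac{v}{1-m} - (-\Delta)^s v^m,\]
whose stationary solutions are $V = c_* U[z,\lambda]^p$ with $c_* := (1-m)^{1/(1-m)}$. Passing to the pressure variable $\varphi := c_*^{-m} v^m$, the evolution equivalently reads
\[(p-1)\,\varphi^{p-1}\,\partial_\tau \varphi \;=\; \varphi^p - (-\Delta)^s \varphi,\]
whose stationary points are precisely the Talenti bubbles $U[z, \lambda]$. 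Lemma \ref{lm:s-pino} supplies the qualitative closeness $\|\varphi(\tau) - U[z, \lambda]\|_{L^\infty(\R^N)} \to 0$ as $\tau \to \infty$, which will be the input for the quantitative argument.

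\textbf{Modulated energy inequality.} For $\tau$ large, modulate the parameters by picking $(z(\tau), \lambda(\tau))$ realising $\dist(\varphi(\tau), \mathcal M)$, and set $\mathcal F(\tau) := \tfrac{1}{2}\|\varphi(\tau) - U[z(\tau), \lambda(\tau)]\|_{\dot H^s(\R^N)}^2$. Differentiating in $\tau$, using the rescaled equation, the orthogonality of $\varphi(\tau) - U[z(\tau), \lambda(\tau)]$ to $T_{z(\tau), \lambda(\tau)}$, and the uniform convergence $\varphi^{p-1} \to U[z(\tau), \lambda(\tau)]^{p-1}$, I expect to obtain, to leading order,
\[\frac{d}{d\tau} \mathcal F(\tau) \;\leq\; -\frac{1 - o(1)}{(N + 2 - 2s)(p - 1)}\,\|(-\Delta)^s \varphi - \varphi^p\|_{\dot H^{-s}(\R^N)}^2,\]
where the factor $(p-1)$ comes from the chain rule $\partial_\tau v = c_* p\,\varphi^{p-1}\partial_\tau \varphi$, and the factor $(N + 2 - 2s)$ from a weighted integration-by-parts identity around the Talenti bubble. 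Theorem \ref{th:main-stability-explicit-constant} then yields $\|(-\Delta)^s \varphi - \varphi^p\|_{\dot H^{-s}(\R^N)}^2 \geq 2(\gamma_{N,s}^2 - o(1))\,\mathcal F(\tau)$, and Grönwall gives $\mathcal F(\tau) \leq C\,e^{-2\kappa\tau}$ for every $\kappa < \kappa_{N,s}$. Reverting to physical time via $\bar T - t = e^{-\tau}$ then yields the desired polynomial bound for $\dist(\varphi(t), \mathcal M)$ in the $\dot H^s$-norm.

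\textbf{From $\dot H^s$ to $L^\infty$, and main difficulty.} Finally, I would upgrade the Hilbertian decay of $\mathcal F$ to the pointwise rate \eqref{eq:asy-q} by using the fractional parabolic smoothing for \eqref{eq:fde} together with the $C^2$ regularity of $u_0$, and by showing that the modulation parameters $(z(\tau), \lambda(\tau))$ converge to limiting $(z, \lambda)$, which follows from the exponential integrability of their derivatives provided by the previous step. The trickiest point is the modulated energy inequality: one must control the modulation ODEs for $z(\tau), \lambda(\tau)$ jointly with the energy identity so that the asymptotically sharp spectral constant $\gamma_{N,s}^2$ (and not merely the implicit $c_{CP}^1(s)$ of Theorem \ref{th:main-stability1}) enters the Grönwall step, and so that the precise coefficient $((N + 2 - 2s)(p - 1))^{-1}$ emerges from the weighted linearisation around the Talenti bubble -- everything else then parallels closely the local case $s = 1$.
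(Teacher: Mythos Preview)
Your broad outline---rescale to an autonomous equation, combine a Lyapunov-type monotonicity with Theorem~\ref{th:main-stability-explicit-constant}, run Gr\"onwall, then upgrade to $L^\infty$---matches the paper. But two of the concrete claims in your plan are not right, and together they misidentify where the explicit constant $\kappa_{N,s}$ actually comes from.

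First, the paper does \emph{not} differentiate $\mathcal F(\tau)=\tfrac12\dist(\varphi(\tau),\mathcal M)^2$. The Lyapunov functional is the Sobolev energy $J(w)=\tfrac12\|w\|_{\dot H^s}^2-\tfrac{1}{2^*_s}\|w\|_{2^*_s}^{2^*_s}$, whose dissipation $-\tfrac{d}{d\tau}J(w)=\tfrac{1}{p}\int w^{1-p}((-\Delta)^s w-w^p)^2$ is \emph{automatically} nonnegative and bounded below by $\tfrac{1}{p}\|w\|_{2^*_s}^{-2s/N}\delta(w)^2$ with $\delta(w)=\|(-\Delta)^s w-w^p\|_{(2^*_s)'}$, via H\"older. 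Theorem~\ref{th:main-stability-explicit-constant} is then used to bound $J(w)-J(U)\leq(\gamma_{N,s}^{-2}+o(1))S_{N,s}^{-1}\delta(w)^2$, yielding a clean Gr\"onwall inequality for $J(w)-J(U)$ with rate $\gamma_{N,s}^2/p$. Differentiating $\mathcal F$ instead would force you to control the modulation ODEs for $(z(\tau),\lambda(\tau))$ and to deal with the mismatch between $U^p$ and $\varphi^p$ in the cross term; this is avoidable, and the paper avoids it.

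Second---and this is the real gap---the factor $(N+2-2s)$ does \emph{not} arise from any ``weighted integration-by-parts identity around the Talenti bubble'' in the energy step. In the paper it enters only at the very end, through an elementary interpolation (Lemma~\ref{lemma lipschitz interpolation}): after passing to the sphere, uniform Lipschitz control on $v(\tau)$ (from \cite{MR3276166}) plus $\|v-1\|_{L^{2^*_s}}\lesssim e^{-\kappa\tau}$ give $\|v-1\|_{L^\infty}\lesssim\|v-1\|_{L^{2^*_s}}^{2/(N+2-2s)}$. Thus the energy-level decay rate is $\gamma_{N,s}^2/p$, not $\kappa_{N,s}$; one loses a factor $2$ via Cauchy--Schwarz in integrating $\delta$, gains the interpolation exponent $2/(N+2-2s)$, and the final time change contributes a factor $p/(p-1)$, producing $\kappa_{N,s}=\gamma_{N,s}^2/((N+2-2s)(p-1))$. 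Your plan puts the $(N+2-2s)$ in the wrong place and then asserts that the $\dot H^s\to L^\infty$ upgrade preserves the exponent, which it does not.
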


The value \eqref{kappa definition} is, to our knowledge, the first explicit lower bound on the optimal extinction rate for the fractional fast diffusion equation \eqref{eq:fde}. This is new even for the non-fractional case $s =1$ treated in \cite{MR3873544}, where \eqref{eq:asy} is only given for some non-explicit constant $\kappa > 0$. The main point which permits us to get an explicit constant is to derive and use Theorem  \ref{th:main-stability-explicit-constant} with the sharp constant, instead of only Theorem \ref{th:main-stability1}. 

For the case $s = 1$, similar explicit extinction rates for the fast diffusion equation with exponent $m \in (\frac{N-2s}{N+2s},1)$ on bounded domains have been obtained recently in \cite{MR4221933, Akagi2021, Choi2022}. In these works, the derived extinction rate turns out to be optimal, in the sense that it coincides with the spectral gap of a suitable operator associated with the linearization of the fast diffusion equation. On the other hand, it turns out that the constant $\kappa_{N,s}$ from Theorem \ref{th:main-fde} does not have the same sharpness property even if we have plugged in the sharp constant from Theorem \ref{th:main-stability-explicit-constant}. We refer to Remark \ref{remark sharpness} for a more detailed discussion of this issue.

\section{Proof of the stability result for one fractional Talenti bubble}
\label{sec:proof-stability1}

{We begin by giving the proof of the qualitative convergence stated in Lemma \ref{lm:s-struwe}. }

\begin{proof}
    [Proof of Lemma \ref{lm:s-struwe}]
  {  By \eqref{eq:bound-energy}, $\{u_k\}_{k \in \N}$ is a bounded sequence in $\dot{H}^s(\R^N)$. By the profile decomposition given in \cite[Théorème 1.1]{Gerard1998}, for every $j \in \N$ there are functions $\psi_j \in \dot{H}^s(\R^N)$ and sequences $\{h_k^{(j)}\}_{k \in \N} \subset (0, \infty)$ and $\{x_k^{(j)}\}_{k \in \N} \subset \R^N$ satisfying 
    \begin{equation}
        \label{weakly interacting}
        \left| \log \frac{h_k^{(i)}}{h_k^{(j)}} \right| + \frac{|x_k^{(i)} - x_k^{(j)}|}{h_k^{(i)}} \to \infty \qquad \text{ as } k \to \infty, \qquad \text{ for all } i \neq j,
    \end{equation} 
    such that, up to passing to a subsequence, for any $\ell \in \N$, $u_k$ can be written as 
    \begin{equation}
        \label{profile dec}
        u_k= \sum_{j = 1}^\ell h_k^{-\frac{N}{2_s^\ast}} \psi_j\left(\frac{ \cdot - x_k^{(j)}}{h_k^{(j)}}\right) + r_k^{(\ell)}, 
    \end{equation}
    where the remainder term $r_k^{(\ell)}$ satisfies $\limsup_{k \to \infty} \|r_k^{(\ell)}\|_{L^{2_s^*}(\R^N)} \to 0$ as $\ell \to \infty$. Moreover, the $\dot{H}^s$-norm decomposes as 
    \begin{equation}
        \label{Hs norm profiles}
        \|u_k\|_{\dot{H}^s(R^N)}^2 = \sum_{j = 1}^\ell \|\psi_j\|_{\dot{H}^s(\R^N)}^2 + \|r_k^{(\ell)}\|_{\dot{H}^s(\R^N)}^2 + o(1) \qquad \text{ as } \ell \to \infty.  
    \end{equation} 
    Since $\|u_k\|_{\dot{H}^s(\R^N)}$ is bounded from below by \eqref{eq:bound-energy}, by \eqref{Hs norm profiles} not all the $\psi_j$ are zero. So we may assume, up to relabeling, that $\psi_1 \not \equiv 0$.}

{As a consequence of the Palais-Smale condition \eqref{eq:ps-conv}, every $\psi_j$ is a solution to 
\begin{equation}
    \label{sobolev lemma proof}
    (-\Delta)^s \psi = |\psi|^{2_s^\ast - 2} \psi \qquad \text{ on } \R^N,
\end{equation} 
in the weak sense. Indeed, fix a test function $\phi$ and let  $(\cdot, \cdot)$ denote the dual bracket between $\dot{H}^s$ and $\dot{H}^{-s}$. Then for $\phi_k^{(j)} := h_k^{-\frac{N}{2_s^\ast}} \phi \left(\frac{ \cdot - x_k^{(j)}}{h_k^{(j)}}\right)$, as a consequence of \eqref{weakly interacting}, we get 
\begin{align*}
    o(1) &= ((-\Delta)^s u_k - u_k^{2^*_s-1}, \psi_k^{(j)}) \\
    &= ((-\Delta)^s \left( h_k^{-\frac{N}{2_s^\ast}} \psi_j\left(\frac{ \cdot - x_k^{(j)}}{h_k^{(j)}}\right) \right) -  \left( h_k^{-\frac{N}{2_s^\ast}} \psi_j\left(\frac{ \cdot - x_k^{(j)}}{h_k^{(j)}}\right) \right) ^{2^*_s-1}, \phi_k^{(j)}) + o(1) \\
    &= ((-\Delta)^s \psi_j - \psi_j^{2^*_s-1}, \phi) + o(1)
\end{align*}
and thus $((-\Delta)^s \psi_j - \psi_j^{2^*_s-1}, \phi) = 0$ for every $\phi \in \dot{H}^s(\R^N)$, which is \eqref{sobolev lemma proof} as claimed. }

{Now every weak solution $\psi$ to \eqref{sobolev lemma proof} satisfies $\| \psi \|_{\dot{H}^s(\R^N)}^2 = \|\psi\|_{2_s^*}^{L^{2_s^*}(\R^N)}$, so that 
\[ S_{N,s} \leq \frac{\| \psi \|_{\dot{H}^s(\R^N)}^2}{\|\psi\|_{2_s^*}^{2}} = \| \psi \|_{\dot{H}^s(\R^N)}^{2 - \frac{4}{2_s^*}} =  \| \psi \|_{\dot{H}^s(\R^N)}^{\frac{4s}{N}},  \]
that is, $\| \psi \|^2_{\dot{H}^s(\R^N)} \geq S_{N,s}^\frac{N}{2s}$. In view of \eqref{Hs norm profiles}, in order to fulfill the energy bound \eqref{eq:bound-energy}, we must have $\psi_j \equiv 0$ for all $j \geq 2$. In turn, $r_k^{(\ell)} = r_k^{(1)} =: r_k$ is independent of $\ell$ and we find $\limsup_{k \to \infty} \|r_k^{(1)}\|_{L^{2_s^*}(\R^N)} = 0$. Denoting $\eta_k :=  h_k^{-\frac{N}{2_s^\ast}} \psi_1\left(\frac{ \cdot - x_k^{(1)}}{h_k^{(1)}}\right)$, we thus know that $\|u_k - \eta_k\|_{L^{2_s^*}(\R^N)} \to 0$ as $k \to \infty$. Since $u \geq 0$ by \eqref{eq:u-nonneg}, this implies $\psi_1 \geq 0$. The classification of non-negative solutions to \eqref{sobolev lemma proof} from \cite{Chen2006} then gives $\psi_1 = U[z_0, \lambda_0]$ for some $z_0 \in \R^N$, $\lambda_0 > 0$. }

{Finally, we upgrade this convergence to $\|u_k - \eta_k\|_{\dot{H}^s(\R^N)} \to 0$ using \eqref{eq:ps-conv}. To do so, we write 
\begin{align*}
    \|u_k - \eta_k\|_{\dot{H}^s(\R^N)}^2 &= ( (-\Delta)^s u_k, u_k - \eta_k) - ((-\Delta)^s \eta_k, u_k - \eta_k),
\end{align*}
where $(\cdot, \cdot)$ again denotes the dual bracket between $\dot{H}^s$ and $\dot{H}^{-s}$. 
Now, we can write 
\begin{align*}
    ( (-\Delta)^s u_k, u_k - \eta_k) &= (u_k^{2_s^*-1}, u_k - \eta_k) + \mathcal O(\|(-\Delta)^s u_k - u_k^{2_s^*-1}\|_{\dot{H}^{-s}(\R^N)} \|u_k - \eta_k\|_{\dot{H}^s(\R^N)}) \\
    &= (u_k^{2_s^*-1}, u_k - \eta_k) + o(1) 
\end{align*} 
by \eqref{eq:ps-conv} and the fact that $u_k$ and $\eta_k$ are bounded in $\dot{H}^s$. }

{Since $\psi_1$ solves \eqref{sobolev lemma proof}, so does its rescaling $\eta_k$, and we altogether get 
\[  \|u_k - \eta_k\|_{\dot{H}^s(\R^N)}^2 = (u_k^{2^*_s-1}, u_k - \eta_k) + (|\eta_k|^{2^*_s -2} \eta_k, u_k -\psi-k) + o(1) = o(1) \]
as desired, as a consequence of the convergence $\|u_k - \eta_k\|_{L^{2_s^*}(\R^N)} \to 0$ (and boundedness of $u_k$ and $\eta_k$ in $L^{2^*_s}$ by Sobolev embedding). }

{If the statement of Lemma \ref{lm:s-struwe} were not true for the entire sequence $\{u_k\}_{k \in \N}$, then it would fail along a subsequence $\{u_{k_n}\}_{k_n \in \N}$, i.e. $\inf_{z, \lambda} \| u_{k_n} - U[z, \lambda] \|_{\dot{H}^s(\R^N)} \geq \eps_0$ for all $n$, for some $\eps_0 > 0$.  Applying the previous reasoning to this subsequence would then lead to a contradiction. Hence, Lemma \ref{lm:s-struwe} actually holds for the entire sequence $u_k$, as claimed.  }
\end{proof}

\begin{remark}[The energy of sign-changing solutions and the role of  the assumption $u_k \geq 0$]
\label{remark sign-changing}
{We now explain why, for $s \in (0,1]$, we are actually able to drop the non-negativity assumption \eqref{eq:u-nonneg} in Lemma \ref{lm:s-struwe}. We choose to only present this as an aside in order to keep the statement of Lemma \ref{lm:s-struwe} more readable, and because in our main application, Theorem \ref{th:main-fde}, we deal with non-negative functions only anyway. }

{
    Let $s \in (0, 1]$ and  $\psi \in H^s(\R^N)$ be a sign-changing  solution of \eqref{sobolev lemma proof}. Then its positive and negative parts, $\psi_\pm$, are both non-trivial and in $\dot{H}^s(\R^N)$ themselves. For $s \in (0,1)$, we denote by 
    \[ I_s(u,v) := \frac{1}{2}  C(N,s) \iint_{\R^N \times \R^N} \frac{(u(x) - u(y))(v(x) - v(y))}{|x-y|^{N+2s}} \dd x \dd y \]  
 the double-integral from \eqref{doubleintegral}, so that $\|u\|_{\dot{H}^s(\R^N)}^2 = I(u,u)$. Testing \eqref{sobolev lemma proof} with $\psi_+$ and $-\psi_-$ and using the fact that  $\psi = \psi_+ -\psi_-$ yields 
 \[ \|\psi_\pm\|_{\dot{H}^s(\R^N)}^2 - I_s(\psi_+, \psi_-) = \|\psi_\pm\|_{2_s^*}^{2_s^*}.  \]
 Moreover, we can directly check that $I_s(\psi_+, \psi_-) < 0$ because $\psi_+$ and $\psi_-$ are non-negative and have disjoint support. As a consequence, similarly to the proof of Lemma \ref{lm:s-struwe}, we can estimate
 \[ S_{N,s} \leq \frac{\|\psi_\pm\|_{\dot{H}^s(\R^N)}^2   }{\|\psi_\pm\|_{L^{2_s^*}(\R^N)}^{2_s^*}} < \frac{\|\psi_\pm\|_{\dot{H}^s(\R^N)}^2 - I_s(\psi_+, \psi_-)}{\|\psi_\pm\|_{L^{2_s^*}(\R^N)}^{2_s^*}}  = (\|\psi_\pm\|_{\dot{H}^s(\R^N)}^2 - I_s(\psi_+, \psi_-))^{1 - \frac{2}{2_s^*}}.  \]
Hence, $\|\psi_\pm\|_{\dot{H}^s(\R^N)}^2 - I_s(\psi_+, \psi_-) > S_{N,s}^\frac{N}{2s}$ and we get 
\begin{equation}
    \label{signchanging bound}
    \|\psi \|_{\dot{H}^s(\R^N)}^2  = \|\psi_+ \|_{\dot{H}^s(\R^N)}^2  +  \|\psi_- \|_{\dot{H}^s(\R^N)}^2 - 2 I_s(\psi_+, \psi_-) \geq 2 S_{N,s}^\frac{N}{2s}.  
\end{equation} 
Since the function $\psi_1$ in the proof of Lemma \ref{lm:s-struwe} satisfies $\| \psi_1\|_{\dot{H}^s(\R^N)}^2 \leq \frac{3}{2} S_{N,s}^\frac{N}{2s}$ as a consequence of \eqref{eq:bound-energy} and \eqref{Hs norm profiles}, we see from \eqref{signchanging bound} that $\psi_1$ cannot be sign-changing. As in the above proof we can then deduce $\psi_1 = \pm U[z_0, \lambda_0]$ for some $z_0 \in \R^N$, $\lambda_0 > 0$. So $\psi_1$ turns out to have a sign automatically,  even without having made the assumption that $u_k$ has a sign. For $s = 1$, the argument is similar but simpler; we omit the details.  This concludes our remark. }
\end{remark}

We now pursue the proof of our main results. Recall that the space $T_{z,\lambda}$ has been defined in \eqref{definition Txl}. We now state a fundamental inequality valid for functions in its $\dot{H}^s$-orthogonal complement $T_{z, \lambda}^\perp$. For $s=1$, this inequality is due to Rey in \cite{Rey1990}. For every $s \in (0, N/2)$, its equivalent version on the sphere $\mathbb S^N$ appears in \cite{MR3179693}.  

\begin{lemma}[Coercivity inequality]\label{lm:coercivity}
For all $z \in \R^N$ and $\lambda > 0$, we have 
\begin{equation}
\label{coerc_ineq_rey}
\norm{v}_{\dot{H}^{s}(\R^N)}^2 - p \int_{\R^N} U[z,\lambda]^{p-1} v^2 \dd y \geq \frac{4s}{N+2s+2}  \norm{v}_{\dot{H}^{s}(\R^N)}^2,
\end{equation}
for all $v \in T_{z,\lambda}^\bot$. {Moreover, the constant $\frac{4s}{N+2s+2}$ is optimal.}
\end{lemma}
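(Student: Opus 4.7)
My plan is to translate the inequality into an explicit spectral problem on $\Sph$ for the conformal fractional Laplacian $\Ps$, whose spectrum is known. First, I would exploit the translation and scaling invariance of the $\dot H^s(\R^N)$-norm, of the integral $\int U[z,\lambda]^{p-1} v^2$ (conformally invariant at the critical Sobolev exponent), and of the subspace $T_{z,\lambda}$ under the natural Sobolev scaling $v \mapsto \lambda^{\ns}v(\lambda\emptyparam + z)$, in order to reduce to the case $z=0$, $\lambda = 1$. Write $U := U[0,1]$ in what follows.

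Next, I would employ stereographic projection with the appropriate conformal weight to produce a linear bijection $\Phi \colon \dot H^s(\R^N) \to H^s(\Sph)$ that intertwines $\Ds$ with $\Ps$, in the sense that
\begin{equation*}
\int_{\R^N}|\ds v|^2 \dd y = \int_{\Sph} \tilde v \, \Ps \tilde v \dd\sigma, \qquad \tilde v := \Phi v.
\end{equation*}
Under $\Phi$, the bubble $U$ becomes a positive constant $c$, so that $\int_{\R^N} U^{p-1} v^2 \dd y = c^{p-1}\int_{\Sph}\tilde v^2 \dd\sigma$. The operator $\Ps$ is diagonal with respect to the spherical harmonic decomposition: on the space $\mathcal H_k$ of degree-$k$ harmonics it acts as multiplication by $\mu_k := \Gamma(\tfrac{N}{2}+k+s)/\Gamma(\tfrac{N}{2}+k-s)$. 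The crucial identification is that $U$ spans $\mathcal H_0$, while $\pl U$ and $\pxi U$ ($i = 1, \ldots, N$) span $\mathcal H_1$; hence $v \in T_{0,1}^\perp$ corresponds precisely to $\tilde v = \sum_{k \ge 2}\tilde v_k$ with $\tilde v_k \in \mathcal H_k$. Linearizing $\Ds U = U^p$ against $\pl U$ yields, after transformation, the identity $p c^{p-1} = \mu_1$.

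With these ingredients in place, \eqref{coerc_ineq_rey} reduces to showing $\mu_1/\mu_k \le 1 - \tfrac{4s}{N+2s+2} = \tfrac{N-2s+2}{N+2s+2}$ for every $k \ge 2$. A direct gamma-function computation gives $\mu_1/\mu_2 = (\tfrac{N}{2}+1-s)/(\tfrac{N}{2}+1+s) = (N+2-2s)/(N+2+2s)$, which is exactly the target right-hand side; moreover $\mu_1/\mu_k$ is strictly decreasing in $k$, so the inequality is strict for $k \ge 3$. Optimality of the constant is then immediate: any nonzero $\tilde v \in \mathcal H_2$ corresponds to some $v \in T_{0,1}^\perp$ that saturates \eqref{coerc_ineq_rey}. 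The main obstacle here is not conceptual but technical: setting up the intertwining $\Phi$ for general fractional $s \in (0, N/2)$ and verifying the matching $\pl U, \pxi U \leftrightarrow \mathcal H_1$. However, these are standard tools from conformal pseudo-differential analysis on $\Sph$ and are available in the form we need, e.g., from \cite{MR3179693}.
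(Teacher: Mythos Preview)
Your proposal is correct and follows essentially the same route as the paper's proof: reduce to $z=0$, $\lambda=1$ by conformal invariance, pass to $\mathbb S^N$ via stereographic projection so that the $\dot H^s$-form becomes $\langle V, A_s V\rangle$ with $A_s$ diagonal on spherical harmonics (eigenvalues $\alpha(k)=\Gamma(k+\tfrac{N}{2}+s)/\Gamma(k+\tfrac{N}{2}-s)$), identify $T_{0,1}^\perp$ with the orthogonal complement of degree $0$ and $1$ harmonics, and read off the inequality and its sharpness from the spectral gap $\alpha(2)>\alpha(1)$ together with $1-\alpha(1)/\alpha(2)=\tfrac{4s}{N+2s+2}$. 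The only cosmetic difference is notation ($\mathcal P_{2s}$ and $\mu_k$ versus the paper's $A_s$ and $\alpha(k)$).
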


In particular, Lemma \ref{lm:coercivity} implies that 
\begin{equation}
\label{[v,w] definition}
[v, w] := \norm{v}_{\dot{H}^{s}(\R^N)}^2 - p \int_{\R^N} U[z,\lambda]^{p-1} v^2 \dd y
\end{equation}
defines a scalar product on $T_{z, \lambda}^\bot$. 

\begin{proof}
{Firstly, by conformal invariance it is enough to prove the inequality for $z = 0$ and $\lambda = 1$ only. By means of the (inverse) stereographic projection $\mathcal S: \R^N \to \mathbb S^N$, we can reduce the inequality to the spectral gap on $\S^N\subseteq\R^{N+1}$. Indeed, it can be justified using the Funk-Hecke formula (see, e.g., \cite[Eq. (25) and Lemma 6]{Frank2023}) that, for $V: \mathbb S^N \to \R$ and $v$ related by
\begin{equation}
    \label{ster proj}
    v(x) = V(\mathcal S(x)) (\det D\mathcal S(x))^\frac{1}{2_s^*} ,
\end{equation}
we have 
\[ \norm{v}_{\dot{H}^{s}(\R^N)}^2 = \scalprod{V, A_s V}_{L^2(\mathbb S^N)}   \] 
for a certain operator $A_s$ acting on $L^2(\mathbb S^N)$. This operator is diagonal with respect to spherical harmonics  (i.e., eigenfunctions of $(-\Delta)_{\mathbb S^N}$) and acts on a spherical harmonic $Y_k$ of degree $k$ by $A_s Y_k = \alpha(k) Y_k$, where the eigenvalue $\alpha(k)$ is given by 
\[ \alpha(k) = \frac{\Gamma(k + \frac{n}{2} + s)}{\Gamma(k + \frac{n}{2} - s)}.  \]
Moreover, the second term in \eqref{coerc_ineq_rey} simply transforms to 
\[ p \int_{\R^N} U[0,1]^{p-1} v^2 \dd y = \alpha(1) \int_{\mathbb S^N} V^2 \dd S. \]
Finally, a computation shows that the condition $v\in T_{0,1}^\bot$ is equivalent to $V:\S^N\to\R$ being $L^2(\mathbb S^N)$-orthogonal to the spherical harmonics $1, \omega_1, ..., \omega_{N+1}$ of degree $0$ and $1$. Inequality \eqref{coerc_ineq_rey} is now equivalent, via these transformations, to the simple spectral inequality 
\begin{equation}
    \label{spectral ineq}
    \scalprod{V, A_s V} \geq \alpha(2) \|V\|_{L^2(\mathbb S^N)}^2,
\end{equation} which holds because $\alpha(k)$ is an increasing sequence. The claimed sharpness of the constant now follows from the facts that \eqref{spectral ineq} is an equality for $V = Y_2$ and that $1 - \frac{\alpha(1)}{\alpha(2)} = \frac{4s}{N+2s+2}$.  }

We refer to \cite[Proof of Proposition 3.4]{DNK21a} for further details.
\end{proof}

\begin{remark}[The role of the linearized operator and its eigenfunctions]\label{rk:linearized}
We shall write $U:= U[0,1]$ for short in what follows. { The discussion from the proof of Lemma \ref{lm:coercivity} can be equivalently rephrased on $\R^N$ by saying that the operator 
\[ \mathcal L := \frac{(-\Delta)^s }{p U^{p-1}} \]
acting on the weighted $L^2$-space $\mathcal H := L^2(p U^{p-1} \dd x)$ has a countable number of eigenvalues $\{\mu_i\}_{i=1}^\infty$ and eigenfunctions for different eigenvalues are orthogonal in $\mathcal H$. Indeed, in terms of the above notation, these eigenvalues are precisely $\mu_k = \frac{\alpha(k)}{\alpha(1)}$, and the eigenvalues $v_k$ correspond precisely to the spherical harmonics $Y_k$ of degree $k$ under the transformation \eqref{ster proj}. }

Since $\scalprod{v,w}_{\dot{H}^{s}(\R^N)} =  \scalprod{\mathcal L v, w}_{\mathcal H}$, and $[v,w] = \scalprod{v,w}_{\dot{H}^{s}(\R^N)} - \scalprod{v, w}_{\mathcal H}$, they are also pairwise orthogonal with respect to the scalar products $\scalprod{\cdot, \cdot}_{\dot{H}^{s}(\R^N)}$ and $[ \cdot, \cdot]$. In fact, the subspace $T_{0,1}$ is precisely spanned by the first two eigenspaces of $\mathcal L$, while $T_{0,1}^\bot$ is spanned by the higher eigenfunctions. {In particular, we have 
\[ \mathcal L U = \mu_0 U, \qquad \mathcal L \partial_{z_i} U = \mu_1 \partial_{z_i} U, \qquad \mathcal L \partial_{\lambda} U = \mu_1 \partial_{\lambda} U, \]
with $\mu_0 = \frac{1}{p}$ and $\mu_1 = 1$, which can also be checked directly.}

In the same spirit, let us now decompose 
\[ T_{0,1}^\bot = E_0 \oplus E_+ \subset \dot H^s, \]
where $E_0$ is the third eigenspace of $\mathcal L$ (corresponding to spherical harmonics of degree $k = 2$ via \eqref{ster proj}) and $E_+$ is the space spanned by the fourth and higher eigenfunctions (corresponding to spherical harmonics of degree $k \geq 3$). Accordingly, any $\rho \in T_{0,1}^\bot$ can be uniquely written as
\[ \rho = \rho_0 + \rho_+ \in E_0 \oplus E_+. \]
Let $\gamma = \frac{4s}{N+2s+2}$ denote the best constant in Lemma \ref{lm:coercivity}.  {Then we have seen in the previous proof that 
\[ [\rho_0, \rho_0] = \gamma \|\rho_0\|_{\dot{H}^{s}(\R^N)}^2 \qquad \text{ for all } \rho_0 \in E_0. \]
By the same argument as the one used in the proof of Lemma \ref{lm:coercivity}, we can further improve the bound of  \eqref{coerc_ineq_rey} for functions in $E_+$:  there is $\gamma_+ > \gamma$ such that 
\[ [\rho_+, \rho_+] \geq \gamma_+ \|\rho_+\|^2_{\dot{H}^{s}(\R^N)} \qquad \text{ for all } \rho_+ \in E_+. \]
Indeed, using the same arguments as in the proof of Lemma \ref{lm:coercivity}, this is equivalent to the improved spectral inequality 
\[ \scalprod{V, A_s V} \geq \alpha(3) \|V\|_{L^2(\mathbb S^N)}^2, \]
which is valid for functions $V$ in the span of spherical harmonics of degree $k \geq 3$, and $\gamma_+ = 1 - \frac{\alpha(1)}{\alpha(3)} > 1 - \frac{\alpha(1)}{\alpha(2)} = \gamma$.} 
\end{remark}

To prove Theorem \ref{th:main-stability-explicit-constant}, we shall need the following lemma on the existence of optimal $z \in \R^N$, $\lambda>0$ such that $\dist(u, \mathcal M) = \|u - U[z,\lambda]\|_{\dot{H}^{s}(\R^N)}$. 

\begin{lemma}[Optimal $z$, $\lambda$]
\label{lemma optimal z lambda}
Let $\dist(u, \mathcal M) < (1 - \eps) \|U[0,1]\|_{\dot{H}^{s}(\R^N)}$. Then there exist $z \in \R^N$ and $\lambda > 0$ such that $\dist(u, \mathcal M) = \|u - U[z,\lambda]\|_{\dot{H}^{s}(\R^N)}$. Moreover, $u$ is orthogonal in $\dot H^s$ to $\partial_\lambda U[z, \lambda]$ and  $\partial_{z_i} U[z, \lambda]$ ($i = 1,...,N$). 
\end{lemma}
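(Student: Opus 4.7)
The strategy is to apply the direct method of the calculus of variations to the minimization problem
\[ \inf_{(z,\lambda) \in \R^N \times (0, \infty)} f(z, \lambda), \qquad f(z, \lambda) := \|u - U[z,\lambda]\|_{\dot{H}^s(\R^N)}^2, \]
and then to read off the orthogonality assertion from the first-order conditions, using the mutual $\dot{H}^s$-orthogonality of $U[z,\lambda]$, $\partial_\lambda U[z,\lambda]$, $\partial_{z_i}U[z,\lambda]$ recorded just before Theorem \ref{th:main-stability-explicit-constant}.

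The hard part will be coercivity: ruling out that a minimizing sequence $(z_k, \lambda_k)$ degenerates, i.e.\ that along a subsequence $\lambda_k \to 0$, $\lambda_k \to \infty$, or $|z_k| \to \infty$. I would dispose of each such scenario by showing that $U[z_k, \lambda_k] \weakto 0$ in $\dot{H}^s(\R^N)$. The cleanest route uses the equation $(-\Delta)^s U[z,\lambda] = U[z,\lambda]^{p}$, which gives
\[ \langle \phi, U[z_k, \lambda_k] \rangle_{\dot{H}^s(\R^N)} = \int_{\R^N} \phi \, U[z_k, \lambda_k]^p \dd x \qquad \text{for all } \phi \in \dot{H}^s(\R^N), \]
combined with the uniform bound $\|U[z_k, \lambda_k]^p\|_{L^{(2^*_s)'}(\R^N)} = \|U\|_{L^{2^*_s}(\R^N)}^p$ and an elementary rescaling $y = \lambda_k(x-z_k)$ showing that $\int \varphi\, U[z_k, \lambda_k]^p \dd x \to 0$ for every $\varphi \in C_c^\infty(\R^N)$ in each of the three degenerate regimes; one then extends by density. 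Since $\|U[z_k, \lambda_k]\|_{\dot{H}^s(\R^N)}$ is constant in $k$, the expansion
\[ \|u - U[z_k, \lambda_k]\|_{\dot{H}^s(\R^N)}^2 = \|u\|_{\dot{H}^s(\R^N)}^2 - 2\, \langle u, U[z_k, \lambda_k]\rangle_{\dot{H}^s(\R^N)} + \|U\|_{\dot{H}^s(\R^N)}^2 \]
then yields $\lim_k \|u - U[z_k, \lambda_k]\|_{\dot{H}^s(\R^N)}^2 \geq \|U\|_{\dot{H}^s(\R^N)}^2 > (1-\eps)^2 \|U\|_{\dot{H}^s(\R^N)}^2 > \dist(u, \mathcal M)^2$, contradicting the minimizing property. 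Therefore $(z_k, \lambda_k)$ remains in a compact subset of $\R^N \times (0, \infty)$, a subsequence converges, and the continuity of $f$ produces a minimizer $(z, \lambda)$.

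At this minimizer, the first-order conditions $\partial_\lambda f(z, \lambda) = 0$ and $\partial_{z_i} f(z, \lambda) = 0$ read
\[ \langle u - U[z,\lambda], \partial_\lambda U[z,\lambda]\rangle_{\dot{H}^s(\R^N)} = \langle u - U[z,\lambda], \partial_{z_i} U[z,\lambda]\rangle_{\dot{H}^s(\R^N)} = 0, \]
and the mutual orthogonality $\langle U[z,\lambda], \partial_\lambda U[z,\lambda]\rangle_{\dot{H}^s(\R^N)} = \langle U[z,\lambda], \partial_{z_i} U[z,\lambda]\rangle_{\dot{H}^s(\R^N)} = 0$ lets one substitute $u$ for $u - U[z,\lambda]$ in each of these identities, which is precisely the claimed orthogonality. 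The only step requiring genuine care is the rescaling argument that forces $U[z_k, \lambda_k]^p \weakto 0$ in $L^{(2_s^*)'}(\R^N)$ in each degeneration regime; this is standard for Aubin--Talenti bubbles and poses no serious obstacle.
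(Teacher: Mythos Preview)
Your proposal is correct and follows essentially the same approach as the paper: both argue that any degenerating minimizing sequence $(z_k,\lambda_k)$ forces $\|u - U[z_k,\lambda_k]\|_{\dot H^s}^2 \to \|u\|_{\dot H^s}^2 + \|U\|_{\dot H^s}^2$, contradict this with the hypothesis, and then read off orthogonality from the first-order conditions. The paper merely states the limit \eqref{dist at infty} as ``straightforward to check'' where you supply the weak-convergence argument, and it phrases the contradiction via an intermediate inequality $\|u - U[z,\lambda]\|^2 < \|u\|^2 + \|U\|^2$ rather than your more direct chain $\|u\|^2 + \|U\|^2 \geq \|U\|^2 > \dist(u,\mathcal M)^2$, but these are cosmetic differences.
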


\begin{proof}
We claim that we only need to show that there is $z$, $\lambda$ such that
\begin{equation}
    \label{compactness bound}
    \|u - U[z, \lambda]\|_{\dot{H}^{s}(\R^N)}^2 < \|u\|_{\dot{H}^{s}(\R^N)}^2 + \|U[0,1]\|_{\dot{H}^{s}(\R^N)}^2.
\end{equation}
Indeed, let $\{(z_k, \lambda_k)\}_{k \in \N}$ by a minimizing sequence for $\dist(u, \mathcal M)$. It is straightforward to check that, if either $|z_k| \to \infty$, $\lambda_k \to \infty$, or $\lambda_k \to 0$, then 
\begin{equation}
    \label{dist at infty}
    \lim_{k \to \infty} \|u - U[z_k, \lambda_k]\|_{\dot{H}^{s}(\R^N)}^2 = \|u\|_{\dot{H}^{s}(\R^N)}^2 + \|U[0,1]\|_{\dot{H}^{s}(\R^N)}^2. 
\end{equation}
Thus, by \eqref{compactness bound}, $\{(z_k, \lambda_k)\}_{k \in \N}$ must stay in a compact subset of $\R^N \times (0,\infty)$ and converges thus up to a subsequence. 

Let us now show \eqref{compactness bound}. By assumption there exist $z$ and $\lambda$ such that 
\[ \|u - U[z, \lambda]\|_{\dot{H}^{s}(\R^N)} < \left(1 - \frac{\eps}{2}\right) \|U[0,1]\|_{\dot{H}^{s}(\R^N)}. \]
Moreover, by triangle inequality, 
\[ \|u\|_{\dot{H}^{s}(\R^N)} \geq \|U[z,\lambda]\|_{\dot{H}^{s}(\R^N)} - \|u - U[z,\lambda]\|_{\dot{H}^{s}(\R^N)} > \frac{\eps}{2} \|U[0,1]\|_{\dot{H}^{s}(\R^N)}. \]
Combining these two inequalities, we get 
\begin{align*} \|u\|_{\dot{H}^{s}(\R^N)}^2 + \|U[0,1]\|_{\dot{H}^{s}(\R^N)}^2 &> \left(1 + \frac{\eps^2}{4}\right) \|U[0,1]\|^2 \\ &> \frac{1 + \frac{\eps^2}{4}}{(1 - \frac{\eps}{2})^2} \|u - U[z,\lambda]\|_{\dot{H}^{s}(\R^N)}^2 > \|u - U[z,\lambda]\|_{\dot{H}^{s}(\R^N)}^2. 
\end{align*}
Hence \eqref{compactness bound} holds.

The claimed orthogonality follows from the fact that, by the minimality of $\|u - U[z, \lambda]\|_{\dot{H}^{s}(\R^N)}$, we deduce  
\[ 0 = \frac{\mathrm d}{\mathrm d t}\Bigr\rvert_{t = 0} \|u - U[z+t e_i, \lambda]\|^2_{\dot{H}^{s}(\R^N)} = \frac{\mathrm d}{\mathrm d t}\Bigr\rvert_{t = 0} \|u - U[z, \lambda + t]\|^2_{\dot{H}^{s}(\R^N)}. \]
This completes the proof. 
\end{proof}

\begin{proof}
[Proof of Theorem \ref{th:main-stability-explicit-constant}]
\textbf{Step 1:} \emph{Reducing to $z = 0$, $\lambda = 1$.} By assumptions \eqref{eq:bound-energy}--\eqref{eq:ps-conv} and Lemma \ref{lm:s-struwe}, we may assume without loss of generality that the assumption of Lemma \ref{lemma optimal z lambda} is satisfied for all $k$. For any $u_k$, let $z_k$ and $\lambda_k$ be the parameters given by Lemma \ref{lemma optimal z lambda}. Now consider the transformation 
\begin{equation}
    \label{transformation tilde u k}
    u_k \mapsto \tilde{u}_k(x) := \lambda_k^{-\frac{N-2s}{2}} u_k (z_k + \lambda_k^{-1} x), 
\end{equation} 
which is an isometry of $\dot H^s(\R^N)$. It leaves  $\|(-\Delta)^s u_k - u_k^p\|_{\dot H^{-s}(\R^N)}$ invariant and satisfies 
\[ \dist(\tilde{u}_k, \mathcal M) = \|u - U[0,1]\|_{\dot{H}^{s}(\R^N)} \quad \text{ and } \quad \int_{\R^N} U[z_k, \lambda_k]^{p-1} u_k^2 \dd x = \int_{\R^N} U[0,1]^{p-1} \tilde{u}_k^2 \dd x. \]
Moreover, $\tilde{u}_k$ is orthogonal in $\dot H^s$ to { $\partial_\lambda U[0,1]$ and  $\partial_{z_i} U[0,1]$. }

In summary, by applying the transformation \eqref{transformation tilde u k} and on account of the invariances just described, we may assume in what follows that $\dist(u_k, \mathcal M) = \|u_k - U[0,1]\|_{\dot{H}^{s}(\R^N)}$ for every $k \in \N$. 

\textbf{Step 2:} \emph{Orthogonal decomposition. }
Going forward, we suppress the index $k$ in the sequence $u_k$ and associated parameters to simplify notation. 
Using $\|f\|_{\dot{H}^{-s}(\R^N)} = \|(-\Delta)^{-s/2} f\|_{L^2(\R^N)}$, we write 
\begin{align*}
    \|(-\Delta)^s u - u^p\|^2_{\dot{H}^{-s}(\R^N)} &= \int_{\R^N} ( \Ds u - u^p)  \big((-\Delta)^{-s} ((-\Delta)^s u - u^p)\big) \dd x \\
   &  {= \int_{\R^N} ( \Ds u - u^p)  (  u - (-\Delta)^{-s} u^p ) \dd x } \\
    &= \| u\|_{\dot{H}^s(\R^N)}^2 - 2 \int_{\R^N} u^{p+1} \dd x + \int_{\R^N} u^p(-\Delta)^{-s} u^p \dd x,
    \end{align*}
    where $p = \frac{N+2}{N-2}$. We write $u = \beta U + \rho$ for some $\beta \in \R$ with $\rho \in T^\perp$ and, from Lemma \ref{lm:s-struwe}, we know $\beta \to 1$ {and $\|\rho\|_{\dot{H}^s(\R^N)} \to 0$}. 
    
    Using the orthogonality conditions, we get \begin{align*}
        \|u\|_{\dot{H}^s(\R^N)}^2 &= \beta^2\|U\|_{\dot{H}^s(\R^N)}^2 + \|\rho\|_{\dot{H}^s(\R^N)}^2 , \\
        - 2  \int_{\R^N} u^{p+1} \dd x &= - 2 \beta^{p+1}  \int_{\R^N} U^{p+1} \dd x - (p+1) p \beta^{p-1}\int_{\R^N} U^{p-1} \rho^2  \dd x  + o(\|\rho\|_{\dot{H}^s(\R^N)}^2), \\
        \int_{\R^N} u^p(-\Delta)^{-s} u^p \dd x &= \beta^{2p} \int_{\R^N} U^{p+1} \dd x + p(p-1) \beta^{2p-2} \int_{\R^N} U^{p-1} \rho^2  \dd x \\
        & \quad + p \beta^{2p-2} \int_{\R^N} U^{p-1} \rho (-\Delta)^{-s} p U^{p-1} \rho \dd x  + o\big(\|\rho\|_{\dot{H}^s(\R^N)}^2\big). 
    \end{align*}
{Since $\beta = 1 + o(1)$ and $\int_{\R^N} U^{p-1} \rho^2 \lesssim \|\rho\|^2_{\dot{H}^s(\R^N)}$ by Hölder and Sobolev's inequalities, we moreover have}
 $\beta^{2p-2} \int_{\R^N} U^{p-1} \rho^2  \dd x = \beta^{p-1}  \int_{\R^N} U^{p-1} \rho^2  \dd x + o\big(\|\rho\|_{\dot{H}^s(\R^N)}^2\big)$. 
 Combining all of this, we get 
\begin{align*}
    \|(-\Delta)^s u - u^p\|^2_{\dot{H}^{-s}(\R^N)} &=(\beta-\beta^{p})^2 \|U\|_{\dot{H}^s(\R^N)}^2 +  \|\rho\|_{\dot{H}^s(\R^N)}^2 - 2 p \beta^{p-1} \int_{\R^N} U^{p-1} \rho^2  \dd x \\
    & \quad + \beta^{2p-2} p \int_{\R^N} U^{p-1} \rho (-\Delta)^{-s} p U^{p-1} \rho \dd x  + o\big(\|\rho\|_{\dot{H}^s(\R^N)}^2\big). 
\end{align*}     
 By the orthogonality conditions, we can write, for certain $\alpha_i \in \R$, 
\[ \rho = \sum_{i = 2}^\infty \alpha_i \psi_i, \]
where $(\psi_i)_{i = 0}^\infty$ are eigenfunctions associated to the eigenvalues $\mu_0 = 1/p$, $\mu_1 =  1$, $\mu_2 > 1$, $\mu_3,...$ of the operator $\mathcal L = \frac{\Ds}{p U^{p-1}}$, which are pairwise orthogonal in $\dot{H}^s(\R^N)$ and in  $L^2(\R^N, \, p U^{p-1} \dd x)$ {(see Remark \ref{rk:linearized})}. {We normalize the $\psi_i$ to satisfy $\| \psi_i \|_{\dot{H}^s(\R^N)} = 1$ for every $i$.}

\textbf{Step 3:} \emph{Conclusion of the proof.} Observing the relations $\mu_i = \frac{\|\psi_i\|^2_{\dot{H}^s(\R^N)}}{p\int_{\R^N} U^{p-1} \psi_i^2 \dd x}$ and $(-\Delta)^{-s} (p U^{p-1} \psi_i) = \mu_i^{-1} \psi_i$, {by the $\dot{H}^s(\R^N)$-orthonormality of the $\psi_i$}, we have
\begin{align*}
 \|\rho\|_{\dot{H}^s(\R^N)}^2 &= \sum_{i=2}^\infty \alpha_i^2 \|\psi_i\|_{\dot{H}^s(\R^N)}^2, \\
 p \int_{\R^N} U^{p-1} \rho^2  \dd x &= \sum_{i=2}^\infty \alpha_i^2 p \int_{\R^N} U^{p-1} \psi_i^2  \dd x = \sum_{i=2}^\infty \alpha_i^2 \mu_i^{-1} \|\psi_i\|_{\dot{H}^s(\R^N)}^2, \\
  p \int_{\R^N} U^{p-1} \rho (-\Delta)^{-s} p U^{p-1} \rho \dd x &= \sum_{i=2}^\infty \alpha_i^2 \mu_i^{-1} p \int_{\R^N} U^{p-1} \psi_i^2  \dd x = \sum_{i=2}^\infty \alpha_i^2 \mu_i^{-2} \|\psi_i\|_{\dot{H}^s(\R^N)}^2.
\end{align*}
Again putting everything together, we find 
\begin{align}
    \|(-\Delta)^s u - u^p\|^2_{\dot{H}^{-s}(\R^N)}&= \sum_{i=2}^\infty \alpha_i^2 (1 - 2 \beta^{p-1} \mu_i^{-1} + \beta^{2p-2} \mu_i^{-2}) \|\psi_i\|_{\dot{H}^s(\R^N)}^2 +  o\big(\|\rho\|_{\dot{H}^s(\R^N)}^2\big) \nonumber \\
    & = \sum_{i=2}^\infty \alpha_i^2 (1 - \mu_i^{-1})^2 \|\psi_i\|_{\dot{H}^s(\R^N)}^2 +  o\big(\|\rho\|_{\dot{H}^s(\R^N)}^2\big) \nonumber \\
    & \geq ((1 - \mu_2^{-1})^2 + o(1)) \|\rho\|_{\dot{H}^s(\R^N)}^2 = (\gamma_{N,s}^2 + o(1)) \|\rho\|_{\dot{H}^s(\R^N)}^2.  \label{spectral estimate}
\end{align}
{Since $(1 - \mu_i^{-1})$ form a strictly increasing sequence, asymptotic equality holds here if and only if $u_k$ satisfies \eqref{optimality condition 2 thm}. For readability, we drop in the following the error term corresponding to \eqref{spectral estimate} and focus on explicitly checking the other, somewhat less standard, optimality condition \eqref{optimality condition 1 thm}.}

On the other hand, the denominator of the quotient we want to evaluate is 
\[ \dist(u, \mathcal M)^2 = \|u - U\|_{\dot{H}^s(\R^N)}^2 = (1 - \beta)^2 \|U\|_{\dot{H}^s(\R^N)}^2 + \|\rho\|_{\dot{H}^s(\R^N)}^2. \]
Hence the full quotient can be estimated by 
\begin{equation}
    \label{quotient proof}
    \frac{ \|(-\Delta)^s u - u^p\|^2_{\dot{H}^{-s}(\R^N)}}{\dist(u, \mathcal M)^2} \geq \frac{(\beta-\beta^{p})^2 \|U\|_{\dot{H}^s(\R^N)}^2 + (\gamma_{N,s}^2 + o(1)) \|\rho\|_{\dot{H}^s(\R^N)}^2}{(1 - \beta)^2 \|U\|_{\dot{H}^s(\R^N)}^2 + \|\rho\|_{\dot{H}^s(\R^N)}^2} = {\frac{a + b }{A + B}, }
\end{equation} 
{where we have defined 
\begin{equation}
    \label{a A definition}
    a = a_k :=  (\beta-\beta^{p})^2  \|\rho\|_{\dot{H}^s(\R^N)}^{-2} \|U\|_{\dot{H}^s(\R^N)}^2, \qquad A = A_k:= (1 - \beta)^2 \|\rho\|_{\dot{H}^s(\R^N)}^{-2}  \|U\|_{\dot{H}^s(\R^N)}^2 ,
\end{equation} 
 $b = b_k:= \gamma_{N,s}^2 + o(1)$, and $B := 1$. Let us also denote $a_\infty = \lim_{k \to \infty} a_k$, $A_\infty = \lim_{k \to \infty} A_k$, and $b_\infty = \lim_{k \to \infty} b_k = \gamma_{N,s}^2$. }

{By a Taylor expansion and the fact that $\beta = 1 + o(1)$, we have $(1- \beta^{p-1}) = (p-1 + o(1))(1 - \beta)$. Hence 
\begin{equation}
    \label{a/A}
     \frac{a}{A} = \frac{(\beta-\beta^{p})^2 }{(1 - \beta)^2} = \frac{\beta^2 (1-\beta^{p-1})^2 }{(1 - \beta)^2} =  (p-1)^2 + o(1). 
\end{equation}
Because of the strict inequality 
\begin{equation}
    \label{strict ineq proof}
    p-1 = \frac{4s}{N-2s} > \frac{4s}{N+2s+2} +o(1) = \gamma_{N,s} +o(1),
\end{equation} 
we thus see $\frac{a_k}{A_k} > \frac{b_k}{B}$ (whenever $A_k \neq 0$). From this, we conclude that 
\begin{equation}
    \label{final estimate proof}
    \frac{ \|(-\Delta)^s u - u^p\|^2_{\dot{H}^{-s}(\R^N)}}{\dist(u, \mathcal M)^2}  = \frac{a + b}{A + B} \geq \frac{b}{B} = \gamma_{N,s}^2 + o(1),
\end{equation} 
which proves \eqref{asymptotic inequality thm}. It remains to discuss the cases of equality in this estimate. 
If $A_\infty > 0$, then, by \eqref{a/A}, we also have $a_\infty > 0$ and, by \eqref{strict ineq proof}, still $\frac{a_\infty}{A_\infty} > \frac{b_\infty}{B}$. Then 
\[ \lim_{k \to \infty}  \frac{ \|(-\Delta)^s u_k - u_k^p\|^2_{\dot{H}^{-s}(\R^N)}}{\dist(u_k, \mathcal M)^2} = \frac{a_\infty + b_\infty}{A_\infty + B} > \frac{b_\infty}{B} = \gamma_{N,s}^2.  \]
Conversely, if $A_\infty = 0$, then also $a_\infty = 0$ by \eqref{a/A} and equality in \eqref{final estimate proof} holds.}

{In summary, equality in \eqref{final estimate proof} holds if and only if $A_\infty = 0$, which, in view of \eqref{a A definition}, is equivalent to $|1- \beta| = o(\|\rho\|_{\dot{H}^s(\R^N)})$. Thus \eqref{optimality condition 1 thm} is proved. } 
\end{proof}

Let us conclude this section by proving 
Theorem \ref{th:main-stability1}. 

\begin{proof}[Proof of Theorem \ref{th:main-stability1}]

\textbf{Step 1:} \emph{Proof of the inequality \eqref{error bound thm}.} By Theorem \ref{th:main-stability-explicit-constant}, there exists $\delta_0 > 0$ such that, whenever $\delta(u) := \|(-\Delta)^s u - u^{2^*_s-1}\|_{\dot{H}^{-s}(\R^N)} \leq \delta_0$, we have  
\[ \|u - U[z,\lambda] \|_{\dot{H}^{s}(\R^N)} \leq \frac{2 }{\gamma_{N,s}} \delta(u) \]
for some $z \in \R^N$ and $\lambda > 0$ as desired. On the other hand, we may observe that, by \eqref{eq:bound-energy}, 
\[ \|u - U[0,1]\|_{\dot{H}^{s}(\R^N)} \leq \|u\|_{\dot{H}^{s}(\R^N)} + \|U[0,1]\|_{\dot{H}^{s}(\R^N)} \leq 3 S_{N,s}^{N/4s} < \infty. \]
Thus, if $\delta(u) > \delta_0$, then the bound $\|u - U[z,\lambda] \|_{\dot{H}^{s}(\R^N)} \leq C \delta(u)$ holds trivially if we take $C:= 3 S_{N,s}^{N/4s}  \delta_0^{-1}$. 

\textbf{Step 2:} \emph{Strictness of the inequality.} To prove that the strict inequality $c^1_{CP}(s) < \gamma_{N,s}^2$ holds, we compute the expansion of the quotient, in the limit as   \(\varepsilon \rightarrow 0\), for a test function \(u= U+\varepsilon \rho\), with \(U=U[0,1]\) and \(\rho \in E_0\) a third eigenfunction of the linearized operator $\mathcal L$ (see Remark \ref{rk:linearized} for the notation). The denominator of the quotient is given by 
\begin{align*}
\operatorname{dist}(u, \mathcal{M})^2=\varepsilon^2\|\rho\|_{\dot{H}^s(\R^N)}^2.
\end{align*}
For the numerator, we do similar computations as in the proof of Theorem \ref{th:main-stability-explicit-constant}, but taking one order more into account in the Taylor expansions of \((U+\varepsilon \rho)^{p+1}\) and \((U+\varepsilon \rho)^p\) (i.e., expanding up to the third order).
Recalling that \((-\Delta)^{-s} U^p=U\) and \((-\Delta)^{-s}\left(p U^{p-1} \rho\right)=\mu_3^{-1} \rho\), we get
\begin{align*}
\|u\|_{\dot{H}^s(\R^N)}^2&=\|U\|_{\dot{H}^s(\R^N)}^2+\varepsilon^2\|\rho\|_{\dot{H}^s(\R^N)}^2,\\
-2 \int_{\mathbb{R}^N} u^{p+1} \dd x & =-2 \int_{\mathbb{R}^N} U^{p+1} \dd  x-(p+1) p \varepsilon^2 \int_{\mathbb{R}^N} U^{p-1} \rho^2 \dd x \\
&\qquad -\frac{(p+1) p(p-1)}{3} \varepsilon^3 \int_{\mathbb{R}^N} U^{p-2} \rho^3 \dd x+o\left(\varepsilon^3\right), \\
\int_{\mathbb{R}^N} u^p(-\Delta)^{-s} u^p \dd x&= \int_{\mathbb{R}^N} U^{p+1} \dd x+p(p-1) \int_{\mathbb{R}^N} U^{p-1} \rho^2 \dd x +p \mu_3^{-1} \int_{\mathbb{R}^N} U^{p-1} \rho^2 \dd x \\
& \qquad + \varepsilon^3\left(\frac{p(p-1)(p-2)}{3}+p(p-1) \mu_3^{-1}\right) \int_{\mathbb{R}^N} U^{p-2} \rho^3 \dd x+o\left(\varepsilon^3\right) .
\end{align*}
Putting these expansions together and using the fact that \(p \int_{\mathbb{R}^N} U^{p-1} \rho^2 \dd x=\mu_3^{-1}\), we get 
\begin{align*} 
\left\|(-\Delta)^s u-u^p\right\|_{H^{-s}(\R^N)}^2=\varepsilon^2\|\rho\|_{\dot{H}^s(\R^N)}^2\left(1-2 \mu_3^{-1}+\mu_3^{-2}\right)+\varepsilon^3 p(p-1)\left(\mu_3^{-1}-1\right) \int_{\mathbb{R}^N} U^{p-2} \rho^3 \dd x+o\left(\varepsilon^3\right).
\end{align*} 

In conclusion, the quotient equals
\begin{align*}
\frac{\left\|(-\Delta)^s u-u^p\right\|_{H^{-s}(\R^N)}^2}{\operatorname{dist}(u, \mathcal{M})^2} &=\left(1-\mu_3^{-1}\right)^2+\varepsilon^3 p(p-1)\left(\mu_3^{-1}-1\right) \int_{\mathbb{R}^N} U^{p-2} \rho^3 \dd x + o(\eps^3)\\
&=\gamma_{N, s}^2+\varepsilon^3 p(p-1)\left(\mu_3^{-1}-1\right) \int_{\mathbb{R}^N} U^{p-2} \rho^3 \dd x+o\left(\varepsilon^3\right).
\end{align*}

Since \(\mu_3>1\), the coefficient \(p(p-1)\left(\mu_3^{-1}-1\right)\) of the \(\varepsilon^3\) term is strictly negative. Therefore, we can conclude, {for $\eps > 0$ small enough}, if we are able to choose \(\rho \in E_0\) such that \(\int_{\mathbb{R}^N} U^{p-2} \rho^3 \dd x>0\). That this is possible has been observed in \cite[p. 5]{2210.08482}. For the sake of completeness, we reproduce the argument here: the choice is 
\begin{align*}
\rho(x)=\left(\frac{2}{1+|x|^2}\right)^{\frac{N}{p+1}} v(\mathcal{S}(x)),
\end{align*}
where \(\mathcal{S}: \mathbb{R}^N \rightarrow \mathbb{S}^N\) is the (inverse) stereographic projection and \(v\) is given by
\begin{align}
\label{test function v}
v(\omega)=\omega_1 \omega_2+\omega_2 \omega_3+\omega_3 \omega_1 .
\end{align}
This definition is what necessitates the extra assumption $N \geq 2$. The function \(v\) is a spherical harmonic of degree \(\ell=2\), so it corresponds to a third eigenfunction of the operator $\mathcal L$. Then, passing to the sphere, we compute
\begin{align*}
\int_{\mathbb{R}^N} U^{2^*_s-3} \rho^3 \dd x=2^{\frac{3(N-2 s)}{2}-N} \int_{\mathbb S^N} v(\omega)^3 \dd \omega=6 \times 2^{\frac{3(N-2 s)}{2}-N} \int_{\mathbb S^N} \omega_1^2 \omega_2^2 \omega_3^2 \dd \omega>0,
\end{align*}
because all the other contributions from multiplying out \(v(\omega)^3\) cancel by oddness. This concludes the proof.
\end{proof}

\section{Asymptotics for the fractional fast diffusion equation}
\label{sec:yamabe}

As an application of Theorem \ref{th:main-stability1}, we are able to prove Theorem \ref{th:main-fde}.

\begin{proof}[Proof of Theorem \ref{th:main-fde}]

\textbf{Step 1:} \emph{Change of variables and preliminaries.}  By a scaling argument, we can consider $z = 0$ and $\lambda = 1$. We change variables by defining $w: (0, \infty) \times \R^N \to [0, \infty)$ to be
\begin{align}
\label{w in terms of u}
    w(\tau, x) := \left(\frac{u(t,x)}{U_{\bar T, 0,1}(t,x)}\right)^{1/p} U[0,1](x) =\left(\frac{u(t,x)}{\left(\frac{p-1}{p}\right)^{\frac{p}{p-1}}(\bar T-t)^{\frac{p}{p-1}} }\right)^{1/p} ,
\end{align}
for $t = \bar T\left(1-\exp\left(\frac{-(p-1)\tau}{p}\right)\right)$, i.e. $\bar T-t = \bar T\exp\left(\frac{-(p-1)\tau}{p}\right)$. 

We claim that the new function $w$ then solves the following equation: 
\begin{align}\label{eq:wp}
    \partial_\tau w^p + (-\Delta)^s w = w^{p}.
\end{align}
{Indeed, set 
\[ \tilde w(\tau, x) = (\bar T - \tilde{t}(\tau))^{-\frac{1}{p-1}} u(\tilde{t}(\tau),x)^\frac{1}{p},  \]
with $\tilde{t}(\tau) = \bar T (1 - e^{-\tau})$. Observing that $\tilde{t}'(\tau) = e^{-\tau} = \bar T - \tilde{t}(\tau)$, a direct computation shows 
\[ \partial_\tau (\tilde{w}(\tau,x)^p) = \frac{p}{p-1} \tilde{w}(\tau,x)^p + (\bar T - \tilde{t}(\tau))^{-\frac{1}{p-1}} (\partial_t u)(\tilde{t}(\tau), x) \]
From this, using the equation for $u$, we get 
\begin{align*}
    (-\Delta)^s \tilde{w} &= (\bar T - \tilde{t}(\tau))^{-\frac{1}{p-1}} ((-\Delta)^s u^\frac{1}{p})(\tilde{t}(\tau), x) = - (\bar T - \tilde{t}(\tau))^{-\frac{1}{p-1}}  (\partial_t u)(\tilde{t}(\tau), x) \\
    &= - \partial_\tau (\tilde{w}(\tau, x)^p) + \frac{p}{p-1} \tilde{w}(\tau, x)^p.
\end{align*}
Since $w(\tau, x) = \left(\frac{p-1}{p} \right)^{-\frac{1}{p-1}} \tilde w(\frac{p}{p-1} \tau, x)$, this  implies \eqref{eq:wp}.}

{As a consequence of the regularity assumption on $u_0$ and \cite[Theorem 5.1]{MR3276166}, we have that $w, \tilde{w} \in C^\infty((0, \infty) \times \R^N)$ and, correspondingly, $u \in C^\infty( (0, T) \times \R^N)$. Indeed, our function $\tilde{w}$ is related to the function $v$ from \cite{MR3276166} by $\tilde{w}(x) = \left(\frac{2}{1+|x|^2}\right)^{\frac{N}{p+1}} v(\mathcal S(x))$, where $\mathcal S: \R^N \to \mathbb S^N$ is (inverse) stereographic projection. Since $\mathcal S$ is smooth, the claimed smoothness of $\tilde{w}$, and hence of $w$ follows from the fact that $v \in C^\infty((0, \infty) \times \mathbb S^N)$, which is proved in \cite[Theorem 5.1]{MR3276166}.}

Moreover, Lemma \ref{lm:s-pino} yields 
\begin{align}\label{eq:wconvfd}
    w(\tau) \to U[0,1] \quad \text{ in  $L^{2^*_s}(\R^N)$ as $\tau \to \infty$.}
 \end{align}

\textbf{Step 2:} \emph{Upgrading to $\dot H^s$-convergence.} In this step, we will prove that the convergence \eqref{eq:wconvfd} implies in fact the slightly better 
\begin{align}
\label{eq:wconvfdHs}
    w(\tau) \to U[0,1] \quad \text{ in  $\dot H^s (\R^N)$ as $\tau \to \infty$.}
 \end{align}

Let us introduce the functional
$$J(w) := \frac{1}{2}\int_{\R^N} |(-\Delta)^{s/2} w|^2 \dd x - \frac{1}{2^*_s} \int_{\R^N} w^{2^*_s} \dd x.$$
We note that $$J(U[z,\lambda]) = S_{N,s}\left(\tfrac{1}{2}-\tfrac{1}{2_s^*}\right)$$ 
and, using \eqref{eq:wconvfd} and \eqref{eq:wp}, we obtain  
\begin{align}
    \label{eq:J-dec} &\tfrac{\mathrm d}{\mathrm d \tau} J(w(\tau, \cdot)) = -\mathcal R,  \\
    \label{eq:J-nn} & \liminf_{\tau \to \infty} J(w(\tau, \cdot)) \ge -\frac{1}{2^*_s} \int_{\R^N} U[0,1]^{2^*_s} \dd x.
\end{align}
where the dissipation rate is given by 
\begin{align}\label{eq:diss-fd}
    \mathcal R :=  \frac{1}{p} \int_{\R^N} \frac{((-\Delta)^sw(\tau,x)-w^p(\tau,x))^2}{w^{p-1}(\tau,x)} \dd x \ge 0
\end{align}
(cf. the computations in \cite[Lemma 5.3 \& 5.4]{MR3276166} for further details). In particular, the quantity $\tau \mapsto J (w(\tau,\cdot))$ is decreasing.

Let us denote 
\begin{equation}
    \label{delta definition}
    \delta(\tau) := \delta(w(\tau)):= \| (-\Delta)^s w(\tau) - w(\tau)^p \|^2_{L^{(2_s^*)'}(\R^N)}. 
\end{equation}
Using H\"older's inequality, we further estimate the dissipation rate as follows: 
\begin{align}
     \mathcal R := \mathcal R(\tau) &:=   \frac{1}{p} \int_{\R^N} \frac{((-\Delta)^s w(\tau,x)+w^p(\tau,x))^2}{w^{p-1}(\tau,x)} \dd x \nonumber \\
     &\ge \frac{1}{p} \delta(w)^2 \left(\int_{\R^N} w(\tau)^{2^*_s} \dd x \right)^{-\frac{2s}{N}} \label{J gronwall bound easy} \\
         & =  \left(\frac{1}{p S_{N,s}} + o(1)\right)  \delta(w)^2  \nonumber
\end{align}
because  $ \int_{\R^N} w(\tau, x) ^{2^*_s} \dd x \to \int_{\R^N} U[0,1] ^{2^*_s} \dd x = S_{N,s}^\frac{N}{2s}$ as $\tau \to \infty$. Here and in the rest of the proof, $o(1)$  denotes a quantity that tends to zero as $\tau \to \infty$.

The last estimate gives $\frac{\dd }{\dd \tau} J(w) \leq -c \delta(w)^2$, for some $c > 0$. Since on the other hand $J(w)$ is bounded from below by \eqref{eq:J-nn}, we must have $\delta(w(\tau_i)) \to 0$ along a sequence $\tau_i \to \infty$. 
Moreover, 
\begin{equation}
    \label{sobolev bound dual}
    \|(-\Delta)^s w - w^p\|_{\dot H^{-s}(\R^N)} \leq S_{N,s}^{-1/2} \delta(w)
\end{equation}
by duality, and $\|w(\tau)\|_{\dot H^s(\R^N)}$ is bounded uniformly in $\tau$ because $J(w(\tau))$ is decreasing. Thus Lemma \ref{lm:s-struwe} applies (or strictly speaking, its generalization replacing hypothesis \eqref{eq:bound-energy} by mere $\dot H^s$ boundedness; see again \cite[Theorem 1.1]{MR3316602}) to yield $\dot{H}^s$-convergence:
\begin{align}\label{eq:hconvfd}
    w(\tau_i) \to U[0,1] \quad \text{ in  $\dot{H}^s(\R^N)$ as $i \to \infty$.}
 \end{align}
 Since $J(w(\tau))$ is decreasing,  
 \begin{align}
 \label{J w tau i limit}
   \lim_{\tau \to \infty} J(w(\tau)) =   \lim_{i \to \infty} J(w(\tau_i,\cdot)) = J(U[0,1]),
 \end{align}
which, together with \eqref{eq:wconvfd}, yields
\begin{align}\label{eq:convsn}
    \int_{\R^N} |(-\Delta)^s w(\tau,x)|^2  \dd x \to   \int_{\R^N} |(-\Delta)^s U[0,1]|^2 \dd x = S^\frac{N}{2s}_{N,s} \qquad \text{ as } \tau \to \infty.
\end{align}
From $\dot H^s$ boundedness of $w(\tau)$ and \eqref{eq:wconvfd}, it is easy to see that $w(\tau) \rightharpoonup U[0,1]$ weakly in $\dot H^s$ as $\tau \to \infty$. Together with the convergence of the $\dot{H}^s$ norms from \eqref{eq:convsn}, this yields \eqref{eq:wconvfdHs}, as desired. 

\textbf{Step 3:} \emph{Upgrading to exponential $L^{2^*_s}$-convergence.} This is the central and conceptually most involved step of the proof, into which Theorem \ref{th:main-stability-explicit-constant} enters. Our goal is to improve the qualitative statement of \eqref{eq:wconvfd} to the exponential convergence 
\[ \|w(\tau) - U[0,1] \|_{L^{2^*_s}(\R^N)} \leq C e^{-\kappa \tau}, \]
for all $\kappa < \kappa_2(N,s)$, for some explicit number $\kappa_2(N,s) > 0$ given in \eqref{kappa 2} below and a generic constant $C > 0$ which may change from line to line and is independent of $\tau$.

Thanks to \eqref{eq:wconvfdHs}, we have $\|w(\tau)\|_{\dot H^s(\R^N)} \to \|U\|_{\dot H^s(\R^N)} = S_{N,s}^\frac{2s}{N}$. Moreover, since $J(w(\tau))$ is decreasing and convergent, we have $\mathcal R \to 0$ as $\tau \to \infty$ by \eqref{eq:J-dec}, hence $\|(-\Delta)^s w(\tau) - w(\tau)^p\|_{\dot H^{-s}(\R^N)} \to 0$ by \eqref{J gronwall bound easy} and \eqref{sobolev bound dual}. 
Thus the sequence $w(\tau)$ satisfies the hypotheses of Theorem \ref{th:main-stability-explicit-constant}, as $\tau \to \infty$. We conclude by that theorem that we can write $w(\tau) = U(\tau)  + \rho(\tau)$, where $U(\tau) = U[z(\tau), \lambda(\tau)]$ is a Talenti bubble with $z(\tau) \to 0$ and $\lambda(\tau) \to 1$, and $\rho(\tau)$ satisfies the bound
\begin{equation}
    \label{rho bound fde}
    \begin{split}
   \|\rho(\tau)\|^2_{\dot H^s(\R^N)} &\leq (\kappa_1(N,s) +o(1)) \|(-\Delta)^s w(\tau) - w(\tau)^p\|_{\dot{H}^{-s}(\R^N) }^2  
   \\ &\leq \frac{\kappa_1(N,s) + o(1)}{S_{N,s}} \delta(w(\tau))^2.
   \end{split}
\end{equation} 
Here, the constant $\kappa_1(N,s):= \gamma_{N,s}^{-2} = \left(\frac{N+2s+2}{4s}\right)^2$ is obtained from the best constant $\gamma_{N,s}$ in Theorem \ref{th:main-stability-explicit-constant}. 

Expanding $J(U(\tau) + \rho(\tau))$ and using $(-\Delta)^s U(\tau) = U(\tau)^p$, we obtain 
\begin{align*}
    J(w(\tau)) &= J(U(\tau)) + \frac 12 \|\rho(\tau)\|^2_{\dot H^s(\R^N)} - \frac{1}{2_s^*} \int_{\R^N} \left( (U(\tau) + \rho(\tau))^{2^*_s} - U(\tau)^{2^*_s} - 2^*_s U(\tau)^p \rho(\tau) \right) \dd x.
\end{align*}
Since $2^*_s > 1$ and $w \geq 0$, the integrand on the right side is pointwise positive by Bernoullli's inequality. Observing $J(U(\tau)) = J(U[0,1])$ and using \eqref{rho bound fde}  and \eqref{sobolev bound dual}, we obtain 
\begin{equation}
    \label{J gronwall bound hard}
    J(w(\tau)) - J(U[0,1]) \leq \frac{\kappa_1(N,s) + o(1)}{S_{N,s}} \delta(w(\tau))^2. 
\end{equation}

Summarizing, the content of \eqref{eq:J-dec}, \eqref{J gronwall bound easy}, and \eqref{J gronwall bound hard} can be rephrased as 
\begin{equation}
    \label{J gronwall bound full}
    \begin{split} 
    \frac{\mathrm d}{\mathrm d \tau} \left(J(w(\tau)) - J(U[0,1]) \right) &\leq - \left(\frac{1}{p S_{N,s}} + o(1)\right) \delta(\tau)^2 
    \\ &\leq -\left(\frac{1}{\kappa_1(N,s) p} + o(1)\right) \left(J(w(\tau)) - J(U[0,1]) \right).
    \end{split}
\end{equation}
Together with the fact that $J(w(\tau))$ is decreasing in $\tau$ and \eqref{J w tau i limit}, this implies 
\[ 0 \leq J(w(\tau)) - J(U[0,1]) \leq C \exp( -\kappa \tau)   \qquad \text{ for every } \kappa < \frac{1}{\kappa_1(N,s) p}.   \]
Integrating the first inequality of \eqref{J gronwall bound full} from $\tau$ to $\infty$, this exponential bound gives
\[ \int_\tau^\infty \delta(t)^2 \dd t \leq C \exp( -\kappa \tau)   \qquad \text{ for every } \kappa < \frac{1}{\kappa_1(N,s) p}. \]
By  Cauchy-Schwarz' inequality, we can also turn this into 
\begin{align*}
     \int_\tau^\infty \delta(t) \dd t &= \sum_{k = 0}^\infty \int_{\tau + k}^{\tau + k +1} \delta(t) \dd t \leq \sum_{k = 0}^\infty \left( \int_{\tau + k}^{\tau + k +1} \delta(t)^2 \dd t \right)^{1/2}
     \\ &\leq C  \sum_{k = 0}^\infty \exp(- \kappa (\tau + k)) \leq C \exp (- \kappa \tau) 
\end{align*}
for every 
\begin{equation}
    \label{kappa 2} \kappa < \kappa_2(N,s) := \frac{1}{2 \kappa_1(N,s) p}. 
\end{equation}
Now the equation \eqref{eq:wp} satisfied by $w(\tau)$ implies 
\[ w(\tau)^p - w(t)^p = \int_\tau^t ((-\Delta)^s w - w^p)(\sigma) \dd \sigma. \]
Taking $L^{(2^*_s)'}$ norms gives 
\begin{align}
\| w(\tau) - w(t)\|_{L^{2^*_s}(\R^N)}& \leq C \|w(\tau)^p - w(t)^p\|_{L^{(2_s^*)'}} \leq C \int_\tau^t \|((-\Delta)^s w - w^p)(\sigma)\|_{L^{(2_s^*)'}(\R^N)} \dd \sigma \\
& \leq C \int_\tau^\infty \delta(\sigma) \dd \sigma \leq C  \exp (- \kappa \tau)
\end{align}
for every $\kappa < \kappa_2(N,s)$. By letting $t \to \infty$, we deduce that 
\begin{equation}
\label{w expconv Lp final}
    \| w(\tau) - U[0,1]\|_{L^{2^*_s}(\R^N)} \leq C \exp (-\kappa \tau)  \qquad \text{ for every } \kappa < \kappa_2(N,s) . 
\end{equation} 

\textbf{Step 4:} \emph{Upgrading to $L^\infty$-convergence. } 
To conclude the proof of the theorem, we show that
\begin{equation}
\label{w tau proof final} 
\left\| \frac{w(\tau)}{U[0,1]} - 1 \right\|_{L^\infty(\R^N)} \leq \exp(-\kappa \tau), \qquad \text{ for all } \kappa < \frac{2}{N + 2 - 2s} \kappa_2(N,s) {=:  \kappa_3(N,s)}, \end{equation}
Indeed, undoing the change of variables \eqref{w in terms of u}, we then deduce 
\[ \left\|\frac{u(t)}{U_{T,0,1}} -1\right\|_{L^\infty(\R^N)} = \left\|\frac{w(\tau)}{U[0,1]} -1\right\|_{L^\infty(\R^N)} \leq C \exp(\kappa \tau)\leq C (T-t)^{\kappa \frac{p}{p-1}}
\]
for all $\kappa < \kappa_3(N,s)$. 
Since a computation shows that $\frac{p}{p-1} \kappa_3(N,s)$ is equal to the constant $\kappa_{N,s}$ given in Theorem \ref{th:main-fde}, the bound \eqref{eq:asy} follows. 

To deduce \eqref{w tau proof final} from \eqref{w expconv Lp final}, it is convenient to change variables once more and pass to the unit sphere $\mathbb S^N$ via stereographic projection. Indeed, let $\mathcal S: \R^N \to \mathbb S^N \subset \R^{N+1}$ be defined by $\mathcal S(x) = \left(\frac{2x}{1+|x|^2}, \frac{|x|^2 - 1}{1 + |x|^2}\right)$ and define a function $v$ on $[0, \infty) \times \mathbb S^N$ by setting 
\[ v(\tau, \mathcal S(x)) = \frac{w(\tau, x)}{U[0,1](x)}. \]
Observing that the Jacobian of $\mathcal S$ is proportional to $U[0,1]^{2^*_s}$, estimate \eqref{w expconv Lp final} becomes
\[ \|v(\tau) - 1\|_{L^{2^*_s}(\mathbb S^N)} \leq C \exp (-\kappa \tau)  \qquad \text{ for every } \kappa < \kappa_2(N,s). \]
Moreover, by \cite[Proof of Theorem 3.1 \& Proposition 5.2]{MR3276166}, $v(\tau)$ is uniformly Lipschitz on $\mathbb S^N$. Since moreover $\|v(\tau) - 1\|_{L^\infty(\mathbb S^N)}$ as a consequence of Lemma \ref{lm:s-pino}, the interpolation-type inequality from Lemma \ref{lemma lipschitz interpolation} below applies to give \eqref{w tau proof final} as desired. 
\end{proof}

\begin{remark}[On the sharpness of the estimate] 
\label{remark sharpness}
Let us complement the preceding proof with a heuristic argument giving a candidate for the truly optimal exponential decay rate. The linearization of \eqref{eq:wp} near the stationary solutions $U = U[0,1]$ is 
\[ p U^{p-1} \partial_\tau h + (-\Delta)^s h = p U^{p-1} h \]
or, equivalently,
\begin{equation}
    \label{h equation}
    \partial_\tau h + (\mathcal L - 1)h = 0 
\end{equation} 
with $\mathcal L$ as in Remark \ref{rk:linearized}. Then a solution $w(\tau)$ to \eqref{eq:wp} that converges to $U$ should be well approximated by $w \sim U + h$, where $h$ solves \eqref{h equation} and tends to zero as $\tau \to \infty$. 

In view of this, the slowest possible exponential convergence rate should be given by $\mu_3 - 1 = \frac{1}{1-\gamma_{N,s}} - 1 = \frac{4s}{N-2s +2} > 0$. Indeed, since $\mu_1 < 1$ and $\mu_2 = 1$ by Remark \ref{rk:linearized}, $\mu_3$ is the smallest eigenvalue of $\mathcal L$ such that $\mu_i - 1$ is positive.  



A computation shows, however, that $\mu_3 - 1 > \kappa_3(N,s)$, so the constant from \eqref{w tau proof final} does not appear to be sharp. In fact, one can check that one even has  $\mu_3 - 1 > \frac{1}{\kappa_1(N,s) p}$, which is the constant we obtain at the energy level for the convergence rate of $J(w(\tau)) - J(U)$. 
\end{remark}

We conclude by proving the interpolation inequality used in the proof of Theorem \ref{th:main-fde}. 

\begin{lemma}[Interpolation inequality]
\label{lemma lipschitz interpolation}
For every $M \geq 1$, there is $\delta_0 > 0$ such that the following holds. If $\|f\|_{L^\infty(\mathbb S^N)} \leq \delta_0$ and $f$ is Lipschitz with constant $\operatorname{Lip}(f) \leq M$, then there is $C > 0$ only depending on $N$, $s$, and $M$ such that 
\[ \|f\|_{L^\infty(\mathbb S^N)} \leq C \|f\|_{L^{2^*_s}(\mathbb S^N)}^\frac{2}{N + 2 - 2s}. \]
\end{lemma}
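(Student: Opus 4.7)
The plan is to exploit the Lipschitz continuity of $f$ directly. If $L := \|f\|_{L^\infty(\mathbb S^N)}$ is attained at some $x_0 \in \mathbb S^N$, then the Lipschitz bound forces $|f|$ to remain comparable to $L$ on a geodesic ball around $x_0$ of radius of order $L/M$. Integrating $|f|^{2^*_s}$ over this ball then gives a lower bound on $\|f\|_{L^{2^*_s}}^{2^*_s}$ of order $L^{2^*_s + N}/M^N$, and solving for $L$ produces the inequality with the correct exponent.

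In detail, first select $x_0 \in \mathbb S^N$ attaining the supremum of $|f|$ (possible since $f$ is continuous on a compact set), and assume without loss of generality that $f(x_0) = L$. The Lipschitz bound $\operatorname{Lip}(f) \le M$ yields $f(x) \ge L - M d(x,x_0)$ in terms of geodesic distance on $\mathbb S^N$, so $f \ge L/2$ on the geodesic ball $B_r(x_0)$ with $r := L/(2M)$.

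Second, choose $\delta_0 > 0$ small enough that $r \le \delta_0/(2M)$ lies below an absolute threshold (e.g. $\pi/4$) on which the geodesic ball admits the Euclidean-type volume lower bound $|B_r(x_0)| \ge c_N r^N$, with $c_N > 0$ depending only on $N$. Since $M \ge 1$, this merely requires $\delta_0$ to be bounded by an absolute constant. Then
\[
\|f\|_{L^{2^*_s}(\mathbb S^N)}^{2^*_s} \;\ge\; \int_{B_r(x_0)} |f|^{2^*_s} \, \dd S \;\ge\; (L/2)^{2^*_s}\, c_N\, r^N \;=\; \tilde c_N \, M^{-N}\, L^{2^*_s + N},
\]
and solving for $L$ gives $L \le C(N,s,M)\, \|f\|_{L^{2^*_s}(\mathbb S^N)}^{2^*_s/(2^*_s + N)}$. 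A direct computation using $2^*_s = 2N/(N-2s)$ yields
\[
\frac{2^*_s}{2^*_s + N} \;=\; \frac{2N/(N-2s)}{N(N+2-2s)/(N-2s)} \;=\; \frac{2}{N+2-2s},
\]
matching the claimed exponent.

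No serious obstacle is expected: the argument is elementary spherical geometry combined with the Lipschitz estimate. The smallness assumption $\|f\|_\infty \le \delta_0$ enters solely to guarantee that the ball $B_r(x_0)$ is small enough for the standard volume lower bound to apply uniformly, and the dependence of $C$ on $M$ comes only through the factor $M^{N/(2^*_s+N)}$ in the final estimate.
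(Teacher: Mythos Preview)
Your proposal is correct and follows essentially the same approach as the paper: both arguments pick a point where $|f|$ attains its maximum, use the Lipschitz bound to control $f$ from below on a small geodesic ball of radius $\sim L/M$, and integrate $|f|^{2^*_s}$ over that ball to obtain a lower bound of order $L^{2^*_s+N}M^{-N}$. The only cosmetic differences are that the paper passes to $\R^N$ via stereographic projection (and integrates the linear lower bound $\delta - 4M|x|$ rather than the cruder constant $L/2$), whereas you invoke the volume lower bound for small geodesic balls directly; neither choice affects the substance of the argument.
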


\begin{proof}
Denote $\delta := \|f\|_{L^{\infty}(\mathbb S^N)}$ and let $\xi_0$ be a point with $f(\xi_0) = \delta$. For $\delta_0>0$ such that $\delta_0/M$ is small enough, the geodesic ball $B = B(\xi_0, \delta/M)$ is well-defined {for every $\delta \leq \delta_0$} and we have 
\[ f(\xi) \geq \delta - 2 M |\xi - \xi_0| \qquad \text{ for every } \xi \in B.  \]
This implies, with $\mathrm d S(\xi)$ denoting the usual surface measure of $\mathbb S^N$,
\begin{align*}
    \int_{\mathbb S^N} f^{2^*_s}(\xi) \dd S(\xi) &\geq \int_B f^{2^*_s} \dd S(\xi) \geq {\frac{1}{2}} \int_{\{|x| \leq { \frac{\delta}{8M} }\}} (\delta - {4} M |x|)^{2^*_s} \dd x \\
    &= \delta^{N + 2^*_s} M^{-N} \int_{\{|x| \leq 1/8\}}(1 - 2 |x|)^{2^*_s} \dd x. 
\end{align*} 
{Here, we used stereographic projection which sends $\xi_0$ to $0 \in \R^N$ in order to pass to the Euclidean space. The fact that, for $\delta_0$ small enough, the stereographic projection is close to an isometry on $B(\xi_0, \delta_0/M)$  justifies the estimate we applied.} 
This chain of inequalities implies the claim. 
\end{proof}

\vspace{3mm}

\section*{Acknowledgments}

N. De Nitti is a member of the Gruppo Nazionale per l’Analisi Matematica, la Probabilità e le loro Applicazioni (GNAMPA) of the Istituto Nazionale di Alta Matematica (INdAM). He has been partially supported by the Alexander von Humboldt Foundation and by the TRR-154 project of the Deutsche Forschungsgemeinschaft (DFG). 

T. König acknowledges partial support through ANR BLADE-JC ANR-18-CE40-002. 

We thank F. Glaudo and T. Jin for helpful discussions on the topic of this work.  {Finally, we are grateful to the anonymous referee, whose comments improved the presentation of the paper.}

\vspace{3mm}

\bibliographystyle{abbrv}
\bibliography{FracSobolevQ-ref.bib}

\vfill

\end{document}